\title{EXPANSIONS OF REAL NUMBERS IN NON-INTEGER BASES AND CHARACTERISATION OF LAZY EXPANSION OF 1}
\author{Vorashil Farzaliyev \\ Supervisor: Dr Tom Kempton}
\date{Completed June 2018, Published in November 2024}
\newtheorem{lemma}{Lemma}[chapter]
\newtheorem{definition}{Definition}[chapter]
\newtheorem{theorem}{Theorem}[chapter]
\newtheorem{algorithm}{Algorithm}[chapter]
\begin{document}
%\begin{multicols}{2}

\maketitle
\newpage

\chapter*{Acknowledgements}
\ \ I would like to sincerely thank my advisor Dr. Tom Kempton, who guided me throughout this year on this project and motivated me to work harder by posing interesting questions for me to think about during each of our meetings. This project would not have been possible without his valuable comments and explanations of the key concepts. I have always felt comfortable asking him my questions, both on and off the project, some of which related to an academic career in mathematics.
\newline
\ \ I would also like to express my gratitude to my family, whose unparalleled support played a pivotal role in helping me achieve a full scholarship to study mathematics abroad and made my education at the University of Manchester possible.
\newpage

\chapter*{Preface}
\ \ This publication represents my undergraduate dissertation, completed during my studies at the University of Manchester. I am publishing it now, years later, following the advice of a fellow student who suggested that presenting the solution for the characterization of 1 would be of interest to others. This work, which I should have published earlier, offers insights into a topic I greatly enjoyed exploring during my undergraduate years. It also provides the solution to the first open problem in \cite{erdos1990characterization}, known as the characterization of the expansion of 1 (i.e. \textit{Problem 1: Characterize the lazy expansions of 1.})
\newpage

\begin{center}
\hspace{0pt}
\vfill
\textit{I dedicate this paper to my grandmother Zarifa Rahimova, whom I have lost in the past years. She was the one who taught me counting and arithmetic, and her teachings are the main foundation of all my current mathematical knowledge.}
\vfill
\hspace{0pt}
\end{center}
\newpage
\chapter*{Abstract}
In this paper, our main focus is expressing real numbers on the non-integer bases. We denote those bases as $\beta$'s, which is also a real number and $\beta \in (1,2)$. This project has 3 main parts. The study of expansions of real numbers in such bases and algorithms for generating them will contribute to the first part of the paper. In this part, firstly, we will define those expansions as the sums of fractions with $1$'s or $0$'s in the nominator and powers of $\beta$ in the denominator. Then we will focus on the sequences of $1$'s and $0$'s generated by the nominators of in the sums we mentioned above. Such sequences will be called \textit{coefficient sequences} throughout the paper. In the second half, we will study the results in the first chapter of  \cite{erdos1990characterization}, namely the greedy and lazy $\beta $-expansions . The last part of the paper will be on the characterisation of lazy expansion of 1, which was the first open question at the end of \textit{Erdos and Komornik}. I still don't know if that problem has been solved already. However, the solution that was presented here is the original work of mine.
\newpage
\tableofcontents
\newpage
\chapter*{Introduction}

\ \ We can express real numbers in many ways. One of which is the binary expansion of real numbers. For example, for all $x \in (0,1)$, we can have $x = \sum_{i=1}^{\infty}\frac{d_i}{2^i}$, where $d_i \in \{0,1\}$. We know that any real number can have at most 2 different binary expansions. Moreover, one can consider expansions in different bases. e.g ternary, decimal, etc. 
\newline
\ \ As one can see, these all are expansions of real numbers on the \textit{integer} bases. Our main focus on this paper will be expansions in the \textit{non-integer} bases. We will call them $\beta$-expansions, where $\beta \in (0,1)$. What makes $\beta$-expansions particularly interesting is the fact that a real number on such bases can have uncountably many different expansions. We will show this result in the Chapter N.
\newline
\ \ Let $\beta \in (0,1)$ be given, and consider the expansion of $x$
\[
x = \sum_{i=1}^{\infty}a_i\beta^{-i} \quad where \ \ a_i \in \{0,1\}
\]
\ \ We know that the smallest $x$ for which we can have $\beta$-expansion is $0$. That is $0 = \frac{0}{\beta}+\frac{0}{\beta^2}+\frac{0}{\beta^3}+\frac{0}{\beta^4}+\dots$. Similarly the biggest $x$ is $\frac{1}{\beta-1} = \frac{1}{\beta}+\frac{1}{\beta^2}+\frac{1}{\beta^3}+\frac{1}{\beta^4}$, which comes from the sum of the geometric series.
\newline
\ \ In the next chapters, we will study algorithms for generating $\beta$-expansions, and we will show that for all $x \in [0,\frac{1}{\beta-1}]$, we can generate $\beta$-expansions.
\chapter{Algorithms for generating $\beta$-expansions}
In this chapter, we will study algorithms for generating $\beta$-expansions. We will also show that using each algorithm we can expand all real numbers $x \in [0,\frac{1}{\beta-1}]$
\section{Algorithm 1}
Let  $\beta \in (1,2)$ and $x \in [0,\frac{1}{\beta-1}]$ be given. To find an expansion of the form  
\[
x=\frac{1}{\beta^{m_1}}+\frac{1}{\beta^{m_2}}+\frac{1}{\beta^{m_3}}+\dots+\frac{1}{\beta^{m_n}}+\dots
\]
where ${m_i} \in \mathbb{N}$. We use Algorithm 1 to find these $m_i$'s. To save the space we will denote $\underbrace{T(T(T\dots}_\text{n times}(x)))$ as $T^{(n)}(x)$ where $n \in \mathbb{N}$.
\newline
\newline
\begin{algorithm}
First we define: $$T(x)=   \left\{
\begin{array}{ll}
      x-\frac{1}{\beta} &  if \quad x>\frac{1}{\beta} \\
      x-\frac{1}{\beta^2} & if \quad  \frac{1}{\beta}>x>\frac{1}{\beta^2} \\
      \dots \\
      x-\frac{1}{\beta^n} & if \quad \frac{1}{\beta^{n-1}}>x>\frac{1}{\beta^n} \\
      \dots \\
      x & if \quad x=0
\end{array} 
\right. $$
\newline
\textbf{STEP 1}: Find $x-T(x)$. 
\newline
In the trivial case, if $x-T(x)=0$, then return $x = 0$ by the definition of $T(x)$. Otherwise, if $x-T(x)>0$, 
\[
\frac{1}{\beta^{m_1}} = x - T(x)
\]
and proceed to the next step.
\newline
\textbf{STEP 2}: Find $T(x)-T^{(2)}(x)$.
\newline
If $T(x)-T^{(2)}(x) = 0$, then return $x = \frac{1}{\beta^{m_1}}$. Otherwise, if $T(x)-T^{(2)}(x) > 0$, 
\[
\frac{1}{\beta^{m_2}} = T(x)-T^{(2)}(x)
\]
and proceed to the next step.
\newline
\textbf{STEP k ($k>2$)}: Find $T^{(k-1)}(x)-T^{(k)}(x)$.
\newline
If $T^{(k-1)}(x)-T^{(k)}(x) = 0$, then return $x = \frac{1}{\beta^{m_1}}+\frac{1}{\beta^{m_2}}+\frac{1}{\beta^{m_3}}+\dots +\frac{1}{\beta^{m_{k-1}}}$. Otherwise, if $T^{(k-1)}(x)-T^{(k)}(x) > 0$, 
\[
\frac{1}{\beta^{m_k}} = T^{(k-1)}(x)-T^{(k)}(x)
\]
and proceed to the next step.
\end{algorithm}
\iffalse
Continuing iteratively, for the $n^{th}$ term in our expansion we need to calculate $T^{(n-1)}(x)-T^{(n)}=\frac{1}{\beta^{m_n}}$
\newline
So using first $n$ elements that we obtained we can approximate $x$ as follows: 
\begin{align*}
  x \quad   & {\geq} \quad \frac{1}{\beta^{m_1}}+\frac{1}{\beta^{m_2}}+\frac{1}{\beta^{m_3}}+\dots+\frac{1}{\beta^{m_n}} \\
       & {=} \quad[x-T(x)]+[T(x)-T^{(2)}(x)]+\dots +[T^{(n-1)}(x)-T^{(n)}(x)] \\
       & {=} \quad x-T^{(n)}(x)  \\
\end{align*}
\newline
We get $x \geq x-T^{(n)}(x)$. That is $T^{(n)}(x)\geq 0$. If $T^{(n)}(x)=0$ for some $n \in \mathbb{N}$, then our expansion is finite with $n$ elements. So it is equal to $x$.
\newline
\newline
However, if $T^{(n)}(x)>0$ for all $n \in \mathbb{N}$, our expansion is infinite. Now we need to show that if the $\beta$ expansion of x is infinite, it converges to $x$.
\newline
\fi
As we see from above, our algorithm terminates and returns finite sum if for some $n \in \mathbb{N}$, $T^{(n)}(x) - T^{(n+1)}(x) = 0$. By the definition of $T(x)$, this is only true, i.e $T^{(n+1)}(x)=T(T^{(n)}(x))=T^{(n)}(x)$ if and only if $T^{(n)}(x)=0$. Hence, we conclude that the $\beta$-expansion of given $x$ is finite if and only if there exist $n \in \mathbb{N}$ such that $T^{(n)}(x)=0$. In that case, the expansion of given $x$ is as follows
\[
x = x = \frac{1}{\beta^{m_1}}+\frac{1}{\beta^{m_2}}+\frac{1}{\beta^{m_3}}+\dots +\frac{1}{\beta^{m_{n}}}
\]
Firstly, we will show that when the expansion of $x$ generated by our algorithm is finite, it's always equal to $x$.
\begin{lemma}
Let the expansion of $x$ given as $\frac{1}{\beta^{m_1}}+\frac{1}{\beta^{m_2}}+\frac{1}{\beta^{m_3}}+\dots +\frac{1}{\beta^{m_{n}}}$ and generated by \textit{Algorithm 1} be finite. Then 
\[
x = \frac{1}{\beta^{m_1}}+\frac{1}{\beta^{m_2}}+\frac{1}{\beta^{m_3}}+\dots +\frac{1}{\beta^{m_{n}}}
\]

\end{lemma}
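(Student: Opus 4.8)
The statement is a telescoping identity, so the plan is to make the bookkeeping explicit and then collapse the sum. First I would fix the convention $T^{(0)}(x) = x$, so that the defining relation of \textit{Algorithm 1} reads uniformly: for each $k \in \{1, 2, \dots, n\}$ we have $\dfrac{1}{\beta^{m_k}} = T^{(k-1)}(x) - T^{(k)}(x)$. I would first remark that this is legitimate, i.e.\ that $T^{(k-1)}(x) - T^{(k)}(x)$ really does equal $\beta^{-m_k}$ for some $m_k \in \mathbb{N}$: this is immediate from the definition of $T$, since for any $y$ in the domain the quantity $y - T(y)$ is, by cases, either $0$ (when $y = 0$) or exactly $\beta^{-m}$ where $m$ is the unique integer with $\beta^{-m} < y < \beta^{-m+1}$.

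Next I would sum the defining relations over $k = 1, \dots, n$:
\[
\sum_{k=1}^{n} \frac{1}{\beta^{m_k}} \;=\; \sum_{k=1}^{n} \bigl( T^{(k-1)}(x) - T^{(k)}(x) \bigr) \;=\; T^{(0)}(x) - T^{(n)}(x) \;=\; x - T^{(n)}(x),
\]
the middle equality being the telescoping cancellation of consecutive terms. Finally, I would invoke the observation established just before the lemma — that the expansion produced by \textit{Algorithm 1} is finite (terminating after $n$ terms) precisely when $T^{(n)}(x) = 0$ — to conclude that the right-hand side equals $x$, which is exactly the claimed identity.

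There is no real obstacle here: the argument is a one-line telescoping sum once the $T^{(0)}(x) = x$ convention and the termination criterion $T^{(n)}(x) = 0$ are in place. The only points requiring a word of care are (i) checking that each summand is genuinely of the prescribed form $\beta^{-m_k}$, which I would dispatch by a direct appeal to the case definition of $T$, and (ii) being careful with the indexing so that the telescoping endpoints are $T^{(0)}(x)$ and $T^{(n)}(x)$ rather than being off by one.
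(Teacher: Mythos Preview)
Your proof is correct, and its mathematical core --- the telescoping identity $\sum_{k=1}^{n}\beta^{-m_k} = T^{(0)}(x) - T^{(n)}(x)$ together with the termination criterion $T^{(n)}(x)=0$ --- is exactly what the paper uses. The only difference is presentational: the paper frames the argument as a proof by contradiction (assume $x>\sum$, expand the sum by telescoping to $x-T^{(n)}(x)=x$, derive $x>x$; then symmetrically for $x<\sum$), whereas you prove the equality directly. Your route is the cleaner one, since the paper's contradiction wrapper adds nothing --- the telescoping computation inside it already establishes the equality outright.
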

\begin{proof}
We will use argument by contradiction to show the statement is true. Firstly assume that,
\[
x > \frac{1}{\beta^{m_1}}+\frac{1}{\beta^{m_2}}+\frac{1}{\beta^{m_3}}+\dots +\frac{1}{\beta^{m_{n}}}
\]
By Algorithm 1, $\frac{1}{\beta^{m_i}} =T^{(i-1)}(x)-T^{(i)}(x)$ for all $i \in \{1,2,3,\dots,n\}$ and here $T^{(0)}(x)$ denotes $x$. Then we can write the inequality above as follows

\begin{equation*} 
\begin{split}
x &> [x - T(x)] + [T(x)-T^{(2)}(x)] + [T^{(2)}(x)-T^{(3)}(x)]+ \dots + [T^{(n-1)}(x)-T^{(n)}(x)]\\
& = x - \cancel{T(x)} + \cancel{T(x)}-\cancel{T^{(2)}(x)} + \cancel{T^{(2)}(x)}-\cancel{T^{(3)}(x)}+ \dots + \cancel{T^{(n-1)}(x)}-T^{(n)}(x)\\
& = x - T^{(n)}(x)
\end{split}
\end{equation*}
Since, the smallest element in our sum is $\frac{1}{\beta^{m_n}}$, by the discussion above we know that $T^{(n)}(x)=0$. Hence, we conclude that $x>x$, which is a contradiction. Therefore our assumption that $x > \frac{1}{\beta^{m_1}}+\dots +\frac{1}{\beta^{m_{n}}}$ is false. 
\newline
Similarly we can show that $x < \frac{1}{\beta^{m_1}}+\dots +\frac{1}{\beta^{m_{n}}}$ is false. Following the method above we can show that, $x < \frac{1}{\beta^{m_1}}+\dots +\frac{1}{\beta^{m_{n}}}$ implies $x<x$, which is also a contradiction.
\newline
Hence, we conclude that 
\[
x = \frac{1}{\beta^{m_1}}+\frac{1}{\beta^{m_2}}+\frac{1}{\beta^{m_3}}+\dots +\frac{1}{\beta^{m_{n}}}
\]
as it's stated in the lemma.
\end{proof}
Now, we need to show that if the expansion generated by \textit{Algorithm 1} is infinite, it converges to $x$. Following lemma shows the result.
\begin{lemma}
Let the expansion of $x$ given as $\sum_{i=1}^{\infty}\frac{1}{\beta^{m_i}}$ and generated by Algorithm 1 be infinite. Then $x= \sum^{\infty}_{i=1}\frac{1}{\beta^{m_i}}$
\end{lemma}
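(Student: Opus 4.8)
The plan is to reduce the lemma to the single statement $\lim_{n\to\infty}T^{(n)}(x)=0$. First I would record the telescoping identity already used in the proof of Lemma~1.1: for every $n$,
\[
\sum_{i=1}^{n}\frac{1}{\beta^{m_i}}=\sum_{i=1}^{n}\bigl(T^{(i-1)}(x)-T^{(i)}(x)\bigr)=x-T^{(n)}(x),
\]
where $T^{(0)}(x)=x$. Hence the partial sums $S_n:=\sum_{i=1}^{n}\beta^{-m_i}$ satisfy $S_n=x-T^{(n)}(x)$, and the claim $x=\sum_{i=1}^{\infty}\beta^{-m_i}$ is exactly the assertion that $T^{(n)}(x)\to 0$.

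Next I would use the standing hypothesis. Since the expansion is infinite, the discussion preceding Lemma~1.1 gives $T^{(n)}(x)>0$ for all $n$ (the algorithm halts precisely when some $T^{(n)}(x)=0$). Therefore $S_n=x-T^{(n)}(x)<x$, while $S_n$ is strictly increasing because each new term $\beta^{-m_i}$ is positive. A bounded increasing sequence converges, so $S_n\to S\le x$; equivalently the decreasing sequence $T^{(n)}(x)=x-S_n$ has a limit $L=x-S\ge 0$, and it remains only to show $L=0$.

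The key step is the quantitative control provided by the definition of $T$. Since $\beta^{-m_{n+1}}=T^{(n)}(x)-T^{(n+1)}(x)=S_{n+1}-S_n\to 0$ and $\beta>1$, we get $m_n\to\infty$. Now at the $n$-th iteration the point $T^{(n)}(x)$ lands in the branch of $T$ labelled by $m_{n+1}$, that is $\tfrac{1}{\beta^{m_{n+1}-1}}>T^{(n)}(x)>\tfrac{1}{\beta^{m_{n+1}}}$; in particular $T^{(n)}(x)<\beta^{-(m_{n+1}-1)}$. As $m_{n+1}\to\infty$ the right-hand side tends to $0$, so $L=\lim_n T^{(n)}(x)=0$. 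Substituting back, $S_n=x-T^{(n)}(x)\to x$, which is the desired equality.

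I expect the only genuine subtlety to be the bookkeeping that justifies the inequality $\tfrac{1}{\beta^{m_{n+1}-1}}>T^{(n)}(x)>\tfrac{1}{\beta^{m_{n+1}}}$ straight from the case-definition of $T$ — i.e.\ correctly identifying which branch is taken at each step — together with the elementary observations that $T^{(n)}(x)$ is monotone decreasing and that the geometric weights shrink because $\beta>1$; the rest is a routine monotone-convergence argument. A variant that sidesteps naming $L$ altogether is the contrapositive: if $T^{(n)}(x)\not\to 0$ then, being decreasing, $T^{(n)}(x)\ge c>0$ for all $n$, which contradicts $T^{(n)}(x)<\beta^{-(m_{n+1}-1)}\to 0$.
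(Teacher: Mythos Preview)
Your proof is correct and shares the paper's scaffolding: the telescoping identity $S_n=x-T^{(n)}(x)$, the observation that $T^{(n)}(x)$ is positive and strictly decreasing (hence convergent to some $L\ge 0$), and finally the verification that $L=0$. The difference lies only in this last step. The paper argues by contradiction: assuming $L=a>0$, it uses that the consecutive differences $T^{(n-1)}(x)-T^{(n)}(x)$ must tend to $a-a=0$ and extracts a contradiction via a sandwich argument. You instead observe that these same vanishing differences force $m_n\to\infty$, and then invoke the branch inequality $T^{(n)}(x)<\beta^{-(m_{n+1}-1)}$ read straight from the case-definition of $T$ to squeeze $T^{(n)}(x)\to 0$ directly. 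Your finish is a bit more transparent because it makes the role of the piecewise structure of $T$ explicit, whereas the paper's contradiction argument keeps that structure implicit; both routes use the same ingredients and reach the same conclusion.
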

\begin{proof}
Since the expansion is not finite, there does not exist $n\in \mathbb{N}$ such that $T^{(n)}(x)=0$. Also, by the definition of $T(x)$ it is never negative. Hence $T^{(n)}(x)>0$ for all $n \in \mathbb{N}$
\newline
To show the convergence, first we will use this fact. That is  $T^{(n)}(x)>0$ for all $n \in \mathbb{N}$. Equivalently,
\[
x > x - T^{(n)}(x) \quad \textit{for all} \ \ n \in \mathbb{N}
\]
This is also equivalent 
\begin{equation*} 
\begin{split}
x &> x - T^{(n)}(x)\\
&=[x - T(x)] + [T(x)-T^{(2)}(x)] + [T^{(2)}(x)-T^{(3)}(x)]+ \dots + [T^{(n-1)}(x)-T^{(n)}(x)]\\
& = \frac{1}{\beta^{m_1}}+\frac{1}{\beta^{m_2}}+\frac{1}{\beta^{m_3}}+\dots +\frac{1}{\beta^{m_{n}}}\\
\end{split}
\end{equation*}
for all $n \in \mathbb{N}$.
\newline
This show that, for all $n \in \mathbb{N}$, the sum of fractions $\frac{1}{\beta^{m_1}}+\frac{1}{\beta^{m_2}}+\frac{1}{\beta^{m_3}}+\dots +\frac{1}{\beta^{m_{n}}}$ does not exceed $x$.
\newline
Now, we need to show that as we add more elements to the sum of fractions, it gets closer to $x$. That is true if  $\lim_{n\to\infty} T^{(n)}(x)=0$, because $x-T^{(n)}(x)$ is getting closer to $x$, as $T^{(n)}(x) \to 0$. Therefore, we prove the lemma, if we show $\lim_{n\to\infty} T^{(n)}(x)=0$.
\newline
To do this we first will show that $T^{(n)}(x)$ is convergent as $n \to \infty$. Then we will show that, it is actually converging to $0$. If we can prove that $T^{(n)}(x)$ is decreasing and bounded as $n \to \infty$, we are done with the proof of convergence of $T^{(n)}(x)=0$
\newline 
Now, we will show $T^{(n)}(x)$ is decreasing as $n \to \infty$. That is 
\begin{align*}
 \forall n \in \mathbb{N} \quad  T^{(n-1)}(x)>T^{(n)}(x)\quad & {\Leftrightarrow} \quad \forall n \in \mathbb{N},\quad T^{(n-1)}(x)>T^{(n-1)}(x)-\frac{1}{\beta^{m_n}} \\
                             & {\Leftrightarrow}\quad \forall n \in \mathbb{N}, \quad  \frac{1}{\beta^{m_n}}>0\\
\end{align*}
Since $\frac{1}{\beta^{m_n}}$ is always positive for all $n \in \mathbb{N}$, and by the equivalence of the statements above, we conclude that $T^{(n)}(x)$ is decreasing as $n \to \infty$.
\newline
\ \ Using the fact that $T^{(n)}(x)$ is decreasing as $n \to \infty$, we can show $T^{(n)}(x)$ is bounded above by $x$ for all $n \in \mathbb{N}$, because $x>T(x)$ and $T(x)>T^{(2)}(x)>T^{(3)}(x)>\dots$. We also showed that $T^{(n)}(x)>0$ for all $n \in \mathbb{N}$, as the expansion is infinite. Hence, we can conclude that 
\[
0 < T^{(n)}(x) < x,\quad \forall n \in \mathbb{N}.
\]

As $T^{(n)}(x)$ is bounded and decreasing, we conclude that it's convergent. To show $\lim_{n\to\infty} T^{(n)}(x)=0$, let's assume by contradiction that it converges to a positive number. Since, $T^{(n)}(x)>0$ for all $n \in \mathbb{N}$ it cannot converge to a negative number. That is,
\[
\lim_{n\to\infty} T^{(n)}(x)=a>0
\]
for some $a \in (0,x)$.
\newline
But we can find $k \in \mathbb{N}$ such that $a>\frac{1}{\beta^{k}}>0$. That is $a>T^{(k-1)}(x)-T^{(k)}(x)>0$.
\newline
By Sandwich Theorem for limits, $$\lim_{k\to\infty}a>\lim_{k\to\infty} T^{(k-1)}(x)-\lim_{k\to\infty} T^{(k)}(x)>\lim_{k\to\infty}0.$$
\newline
That is, $$a>a-a>0.$$
\newline
So $a>0>0$ is a contradiction we get from our assumption. Hence $$\lim_{n\to\infty} T^{(n)}(x)=0.$$
\newline
The line above, finishes the proof of this lemma.So an infinite expansion of $x$ generated by \textit{Algorithm 1} always converge to $x$.
\end{proof}
\section{Algorithm 2}
Now we know that, we can always have $\beta$-expansion for given $x \in [0,\frac{1}{\beta-1}]$.
We might also be interested in the coefficient sequence $\{a_i\}_{i=1}^{\infty} $for given $x$ generated by $\beta$ expansion ($x=\sum_{i=1}^{\infty} a_i \beta^{-i}$, where $a_i \in \{0,1\}$). In our previous algorithm, we neglected all terms with zero coefficients, but using Algorithm 2, we can collect all coefficients in our expansion. This is particularly useful, if we want to express $x$ on the non-integer bases, like $\beta$. This allows us to study more interesting behaviours of real numbers on such bases.

\begin{algorithm} 
Let $x \in [0,\frac{1}{\beta-1}]$ and $\beta \in (1,2)$ be given such that $$x = \sum_{i=1}^{\infty}{a_i}{\beta^{-i}} \quad \text{where} \quad a_i \in \{0,1\}.$$
\newline
To find $a_i$'s, first we define a new function 
\[T^{(i)}(x)=   \left\{
\begin{array}{ll}
      \beta T^{(i-1)}(x)-1 &  T^{(i-1)}(x)\geq \frac{1}{\beta} \\
      \beta T^{(i-1)}(x) &  T^{(i-1)}(x)< \frac{1}{\beta} \\
\end{array} 
\right.
\]for $i\geq 1$ and where $T^{(i)}(x) = \underbrace{(T \circ T \circ \dots \circ T)}_{\text{i times}}(x)$ with $T^{(0)}(x)=x$.
\newline
Then, to find $a_i$, we simply calculate $$a_i=\beta T^{(i-1)}(x)-T^{(i)}(x)$$.
That is
\[a_i=   \left\{
\begin{array}{ll}
     0 & \textit{ if }T^{(i)}(x)=\beta T^{(i-1)}(x)\\
     1   &\textit{ if }T^{(i)}(x)=\beta T^{(i-1)}(x)-1 \\
\end{array} 
\right.
\]

\end{algorithm}
Here is the graph of $T(x)$.
\begin{figure}[H]
\centering
\includegraphics[width=10cm]{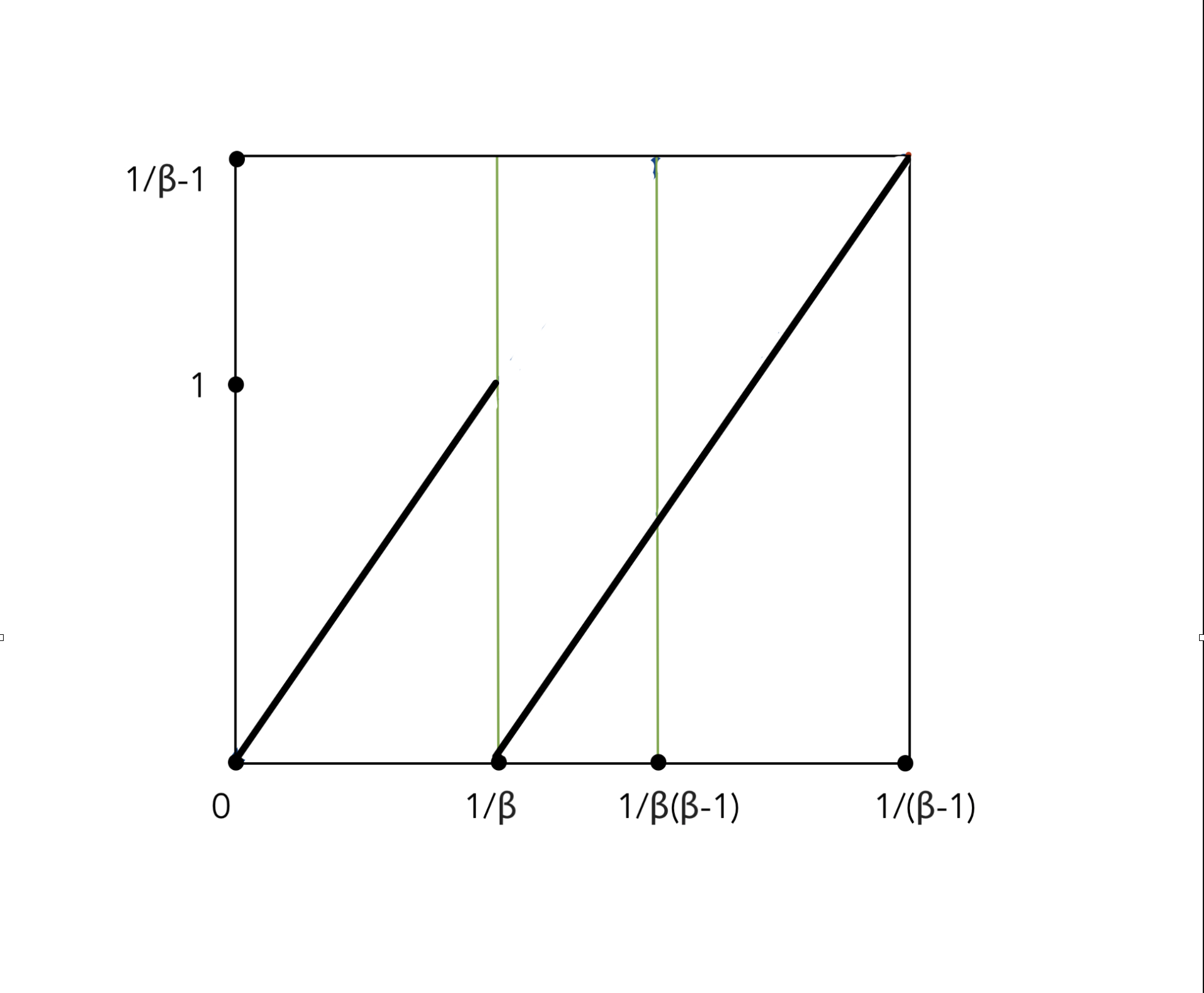}
\caption{Diagram of {$T(x)$}}
\label{fig:lion}
\end{figure}

As one can observe from the graph, and the formula for $a_i$
\[
\left\{
\begin{array}{ll}
     T^{(i-1)}(x) \in [0,\frac{1}{\beta}) & \Rightarrow a_i = 0\\
     T^{(i-1)}(x) \in [\frac{1}{\beta},\frac{1}{\beta-1}]   &\Rightarrow a_i=1\\
\end{array} 
\right.
\] 
We can also see that the largest value $T(x)$ can take is $\frac{1}{\beta-1}$. We will use this fact in the proof of the following lemma to show that \textit{Algorithm 2} generates expansion for $x$.

\begin{lemma}
Let $(a_i)_{i=1}^{\infty}$ be a coefficient expansion of $x \in [0,\frac{1}{\beta-1}]$ generated by \textit{Algorithm 2}. Then $x = \sum_{i=1}^{\infty}\frac{a_i}{\beta^i}$.
\end{lemma}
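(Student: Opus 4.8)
The plan is to mimic the telescoping argument used for Algorithm~1, but now carrying the ``tail'' term $\beta^{-n}T^{(n)}(x)$ along explicitly rather than discarding it. First I would rewrite the defining relation $a_i = \beta T^{(i-1)}(x) - T^{(i)}(x)$ as
\[
T^{(i-1)}(x) = \frac{a_i + T^{(i)}(x)}{\beta},
\]
valid for every $i \ge 1$. Substituting this identity into itself repeatedly — formally, a short induction on $n$ — gives
\[
x = T^{(0)}(x) = \sum_{i=1}^{n}\frac{a_i}{\beta^{i}} + \frac{T^{(n)}(x)}{\beta^{n}}
\qquad \text{for all } n \in \mathbb{N}.
\]
So the lemma reduces to showing that the remainder $\beta^{-n}T^{(n)}(x)$ tends to $0$ as $n \to \infty$.

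Second, I would prove the invariant $0 \le T^{(n)}(x) \le \frac{1}{\beta-1}$ for every $n$, by induction on $n$ using the two-case definition of $T$. If $T^{(n-1)}(x) < \tfrac{1}{\beta}$ then $T^{(n)}(x) = \beta T^{(n-1)}(x) \in [0,1) \subseteq [0,\frac{1}{\beta-1}]$, where the inclusion uses $\beta < 2$; if $T^{(n-1)}(x) \ge \tfrac{1}{\beta}$ then $T^{(n)}(x) = \beta T^{(n-1)}(x) - 1$, which is $\ge 0$ and is $\le \beta\cdot\frac{1}{\beta-1} - 1 = \frac{1}{\beta-1}$. This is precisely the observation, already recorded just after the statement of Algorithm~2, that the largest value $T$ takes is $\frac{1}{\beta-1}$; promoting it to a clean invariance statement for the interval $[0,\frac{1}{\beta-1}]$ is the step that needs the most care, though it is entirely elementary.

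Finally, since $0 \le T^{(n)}(x) \le \frac{1}{\beta-1}$ and $\beta > 1$ forces $\beta^{n} \to \infty$, the Sandwich Theorem gives $\beta^{-n}T^{(n)}(x) \to 0$. Letting $n \to \infty$ in the displayed identity — the partial sums $\sum_{i=1}^{n} a_i\beta^{-i}$ are nondecreasing and, by that same identity, bounded above by $x$, hence convergent — yields $x = \sum_{i=1}^{\infty} a_i\beta^{-i}$, as claimed. The only genuine subtlety is the interval-invariance in the middle step; everything else is bookkeeping analogous to the finite- and infinite-expansion lemmas proved for Algorithm~1.
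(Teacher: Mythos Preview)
Your proposal is correct and follows essentially the same approach as the paper: both derive the identity $x = \sum_{i=1}^{n} a_i\beta^{-i} + \beta^{-n}T^{(n)}(x)$ by telescoping the relation $a_i = \beta T^{(i-1)}(x) - T^{(i)}(x)$, establish the invariant $0 \le T^{(n)}(x) \le \tfrac{1}{\beta-1}$ by induction, and then apply the Sandwich Theorem to kill the remainder. Your case analysis for the invariance step is slightly more explicit than the paper's, but the argument is otherwise the same.
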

\begin{proof}
We will use the first $n$ elements of our expansion to show that, for all $n \in \mathbb{N}$, the sum of fractions is less than or equal to $x$. That is,
\[
\frac{a_1}{\beta^1}+\frac{a_2}{\beta^2}+\dots + \frac{a_n}{\beta^n} \leq x
\]
Then we will show that, as $n \to \infty$, the sum converges to $x$.
Firstly,
\begin{equation*} 
\begin{split}
x &\geq \frac{a_1}{\beta^1}+\frac{a_2}{\beta^2}+\dots + \frac{a_n}{\beta^n}\\
&= \frac{\beta x-T(x)}{\beta^1}+\frac{\beta x-T(x)}{\beta^1}+\frac{\beta T^{(2)}(x)-T^{(3)(x)}}{\beta^n}+\dots + \frac{\beta T^{(n-1)}(x)-T^{(n)}(x)}{\beta^n}\\
&=x - \frac{T(x)}{\beta}+\frac{T(x)}{\beta}-\frac{T^{(2)}(x)}{\beta^2}+\frac{T^{(2)}(x)}{\beta^2}-\frac{T^{(3)}(x)}{\beta^3}+\dots+\frac{T^{(n-1)}(x)}{\beta^{n-1}}-\frac{T^{(n)}(x)}{\beta^n}\\
&=x - \frac{T^{(n)}(x)}{\beta^n}
\end{split}
\end{equation*}
Since $\frac{T^{(n)}(x)}{\beta^n} \geq 0$, we can conclude that the sum of first $n$ elements is always less than or equal to $x$.
\newline
Now, we need to show that as we add all elements, the sum converges to $x$. In fact, that is equivalent to saying  $\frac{a_1}{\beta^1}+\frac{a_2}{\beta^2}+\dots + \frac{a_n}{\beta^n} \to x$ as $n \to \infty$. Therefore, if we can show that
\[
lim_{n \to \infty} x - \frac{T^{(n)}(x)}{\beta^n} = x
\]
we are done. In fact, as we discussed above, the maximum value $T(x)$ can take is $\frac{1}{\beta-1}$.
\[
0 \leq T(x) \leq \frac{1}{\beta-1} \textit{ for all } x \in [0,\frac{1}{\beta-1}]
\]
Using this fact above as a base case, by induction we can also see that, for all $n \in \mathbb{N}$,  $0 \leq T^{(n)}(x) \leq \frac{1}{\beta-1}$ since $x, T(x) \in [0,\frac{1}{\beta-1}]$. Hence,
\[
0 \leq T^{(n)}(x) \leq \frac{1}{\beta -1}
\]
Then. by dividing each side by $\beta^n$
\[
\frac{0}{\beta^n} = 0 \leq \frac{T^{(n)}(x)}{\beta^n} \leq \frac{1}{\beta^n(\beta-1)}
\]
Using the Sandwich Theorem for limits
\[
lim_{n \to \infty} 0 \leq lim_{n \to \infty}\frac{T^{(n)}(x)}{\beta^n} \leq lim_{n \to \infty}\frac{1}{\beta^n(\beta-1)}
\]
\[
0 \leq lim_{n \to \infty}\frac{T^{(n)}(x)}{\beta^n} \leq 0
\]
Hence, $lim_{n \to \infty}\frac{T^{(n)}(x)}{\beta^n} = 0$. So $lim_{n \to \infty} x - \frac{T^{(n)}(x)}{\beta^n} = x$. Hence, we conclude that the sum converges to $x$ as $n\to \infty$.Hence, $x = \sum_{i=1}^{\infty}a_i\beta^{-i}$
\end{proof}
So using \textit{Algorithm 2} we get the coefficient sequence with only 1's and 0's, given as $$\{a_i\}_{i=1}^{\infty} \quad \text{for each}\quad  x\in [0,\frac{1}{\beta-1}].$$
\newline

\section{Different versions of Algorithm 2}
In this section, we will modify Algorithm 2 by replacing $T(x)$ with slightly different functions, and see what kind of sequences we could get. As we mentioned earlier, almost aThis suggest that an expansion of a some $x \in [0, \frac{1}{\beta-1}]$ might not be unique in the base-$\beta$. As an example of different functions to replace $T(x)$, we could use : 
\[
R^{(i)}(x)=   \left\{
\begin{array}{ll}
      \beta R^{(i-1)}(x) &  R^{(i-1)}(x)\leq \frac{1}{\beta(\beta-1)} \\
      \beta R^{(i-1)}(x)-1 &  R^{(i-1)}(x)> \frac{1}{\beta(\beta-1)} \\
\end{array} 
\right. 
\]for $i\geq 1$ and where $R^{(i)}(x) = \underbrace{(R \circ R \circ \dots \circ R)}_{\text{i times}}(x)$ with $R^{(0)}(x)=x$.
\newline

Here we denote the elements of coefficient expansions generated using $R(x)$ by  $b_i$'s.To find a particular $b_i$ we need to calculate 
\[
b_i=\beta R^{(i-1)}(x)-R^{(i)}(x).
\]
Following is the diagram of $R(x)$

\begin{figure}[H]
\centering
\includegraphics[width=10cm]{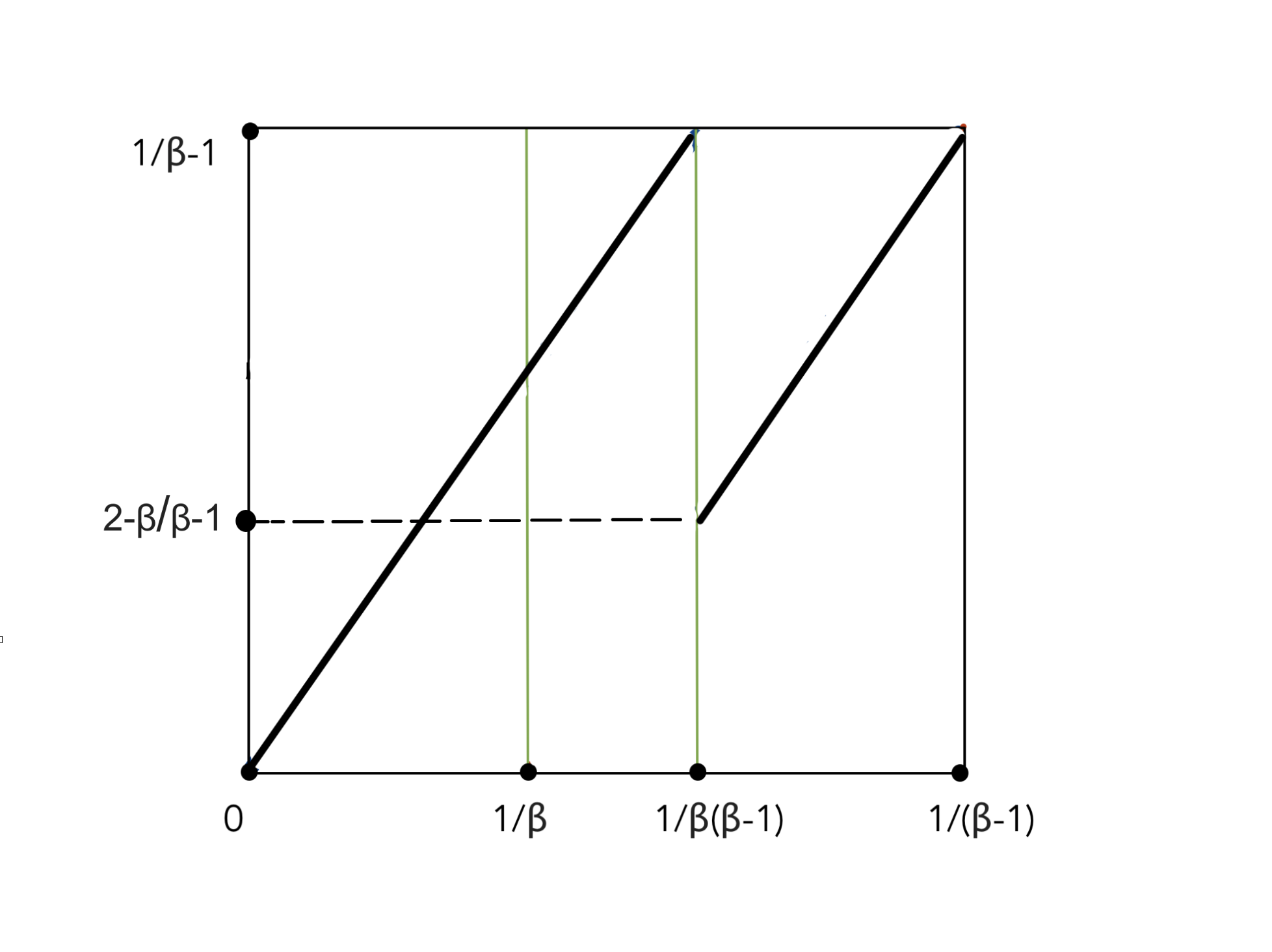}
\caption{Diagram of {$R(x)$}}
\label{fig:lion}
\end{figure}
As one can observe from the graph, and formula for $b_i$
\[
\left\{
\begin{array}{ll}
     R^{(i-1)}(x) \in [0,\frac{1}{\beta(\beta-1)}] & \Rightarrow b_i = 0\\
     R^{(i-1)}(x) \in (\frac{1}{\beta(\beta-1)},\frac{1}{\beta-1}]   &\Rightarrow b_i=1\\
\end{array} 
\right.
\] 
for all $i \in \mathbb{N}$.
\newline
We can use the similar arguments that we used in the proof of \textit{Algorithm 2} (\textit{Lemma 1.3}) to show that $x = \sum_{i=1}^{\infty}b_i\beta^{-i}$
\newline
\chapter{Different $\beta$-expansions for $x$}
As we discussed earlier, we might have different $\beta$-expansions for a particular $x$. In this chapter, we will look into generating them using different methods in different sections. In the first section we will generate combine $R(x)$ and $T(x)$ in a diagram, and show that on sometimes we are free to choose $0$ or $1$ for the some $a_i$ in the coefficient sequence of $x$. Hence, we can have different expansions of it by making those choices different each time. In the second section, we will look for words of length $n$ on the coefficient sequence , and see when we can replace them with the different words of same length. For example, if $\beta = \frac{1+\sqrt{5}}{2}$, then we can replace $100$ with $011$ in the coefficient sequence. In fact, using this method we will show that almost all $x \in [0,\frac{1}{\beta-1}]$ have uncountably infinite $beta$-expansion.
\section{Generating $\beta$-expansions using both $T(x)$ and $R(x)$}
\iffalse
Following is the diagram of $T(x)$ and $R(x)$. As it can be observed from the diagram, on the region $(\frac{1}{\beta}, \frac{1}{\beta(\beta-1)})$, the functions takes distinct values. This kind of behavior emerges interesting features, which we will study later in depth. This can already be observed that, we might have different $\beta$ expansions of $x$ for a given
$\beta$.
\fi
We already know that we generate the $\beta$-expansion of $x$ using \textit{Algorithm 2} with $T(x)$ and $R(x)$ seperately. In this section we will try to use them both at the same time to generate coefficient sequence. As they overlap on some regions of $[0,\frac{1}{\beta-1}]$, we are justified to do this. We will see that for non-overlapping regions, we are free to use $R(x)$ or $T(x)$. That means we are free to choose $a_i$ to be $0$ or $1$, if we lay on the non-overlapping region while expanding $x$. Following is the diagram of $T(x)$ and $R(x)$.
\begin{figure}[htp]
\centering
\includegraphics[width=8cm]{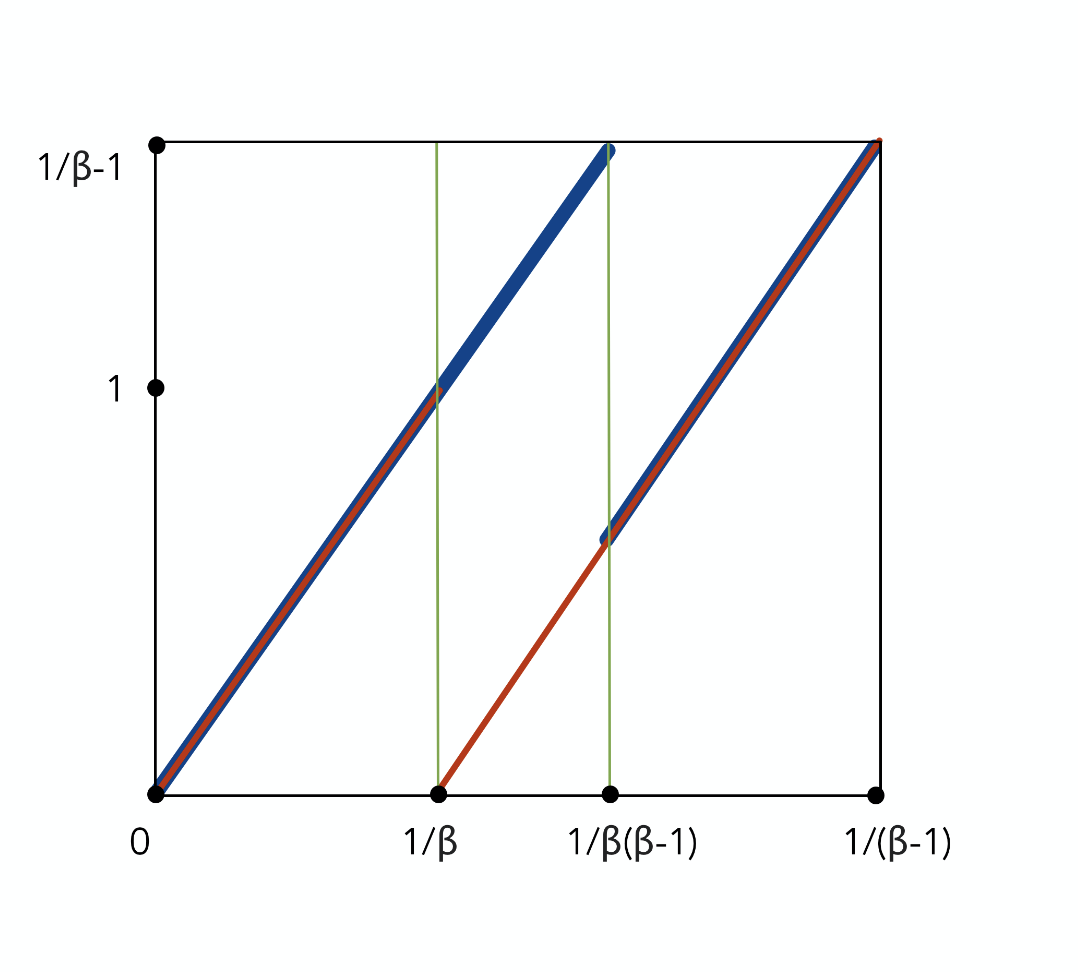}
\caption{Diagram of \textcolor{red}{$T(x)$}, and \textcolor{blue}{$R(x)$}}
\label{fig:lion}
\end{figure}
\newline
We divide the region $[0,\frac{1}{\beta-1}]$ into 3 non-overlapping regions, as follows
\[
[0,\frac{1}{\beta-1}] = [0,\frac{1}{\beta}) \cup [\frac{1}{\beta},\frac{1}{\beta(\beta-1)}] \cup (\frac{1}{\beta(\beta-1)},\frac{1}{\beta-1}]
\]
\newline
On the first region $[0,\frac{1}{\beta})$, $T^{(i)}(x)$ and $R^{(i)}(x)$ overlap also recall from the Section 1.2 and 1.3
\[
T^{(i)}(x) = \beta T^{(i-1)}(x)-1 \textit{ for all } i \in \mathbb{N}
\]
\[
R^{(i)}(x) = \beta R^{(i-1)}(x) \textit{ for all } i \in \mathbb{N}
\]
Hence, $a_i = b_i = 0$ if we lay on this region at some step i.
\newline
On the second region $[\frac{1}{\beta},\frac{1}{\beta(\beta-1)}]$, $T^{(i)}(x)$ and $R^{(i)}(x)$  do not overlap , also recall from the Section 1.2 and 1.3
\[
T^{(i)}(x) = \beta T^{(i-1)}(x) \textit{ for all } i \in \mathbb{N}
\]
\[
R^{(i)}(x) = \beta R^{(i-1)}(x) \textit{ for all } i \in \mathbb{N}
\]
Hence, $a_i = 1$ and $b_i = 0$ if we lay on this region at some step i. Therefore for the $i$'th element of our expansion we are free to choose $0$ or $1$.
\newline
On the third region $(\frac{1}{\beta(\beta-1)},\frac{1}{\beta-1}]$, $T^{(i)}(x)$ and $R^{(i)}(x)$ overlap also recall from the Section 1.2 and 1.3
\[
T^{(i)}(x) = \beta T^{(i-1)}(x)-1 \textit{ for all } i \in \mathbb{N}
\]
\[
R^{(i)}(x) = \beta R^{(i-1)}(x)-1 \textit{ for all } i \in \mathbb{N}
\]
Hence, $a_i = b_i = 1$ if we lay on this region at some step i.
\newline
Note that, if we choose $0$ when we are in the middle region, on the next step we will be pushed to right on the diagram.(Not necessarily to the right region though, depending on $\beta$ we might still be on the middle region, but definitely on the right of the current point) That is because we need to compensate the $0$'s in the sequence by putting $1$'s after that. However, if we choose $1$, on the next step we will be pulled left because we need to add $0$'s afterwards to balance out the $1$'s. (Again not necessarily to the left region as before).
\newline
Hence we conclude that any coefficient sequence $(c_i)_{i=1}^{\infty}$ for $x$ that we obtain using this method will satisty $x = \sum_{i=1}^{\infty}c_i\beta^{-i}$.
\section{Replacing words of length $n$}
\ \ As we mentioned in the beginning of the chapter, for some $\beta$ we have option to replace a specific words of length $n$, with different words of the same length. In this section we will start talking about swapping words, when $\beta$ is golden ratio, that is $\beta=\frac{1+\sqrt{5}}{2}$. We will prove that we can swap $100$ with $011$ or other way around, when $\beta$ is golden ratio. Then we will look into other words of length $3,4$ and try to find out for which $\beta \in (1,2)$ we have an option to swap them. 
\newline
\ \ Note that, the words that can be replaced with each other should be different from each other in each element. For example $011$ and $100$ is differs from each other in each element, while $101$ and $001$ have the same element in the second place.
\newline
\ \ Let's choose $\beta = \frac{1+\sqrt(5}{2}$ \textit{(Golden Ratio)}, and let  $\{a_i\}_{i=1}^{\infty}$ be generated by a fair coin toss, where $x=\sum^{\infty}_{i=1}a_i \beta^{-i}$. By the following result, we will prove that we can replce $100$ with $011$ when $\beta = \frac{1+\sqrt{5}}{2}$.
\begin{lemma}
Let $\beta = \frac{1+\sqrt{5}}{2}$, and $x$ is given as above. Then if we swap $100$ with $011$ in $\{a_i\}_{i=1}^{\infty}$, we still get on coefficient sequence of $x$.
\end{lemma}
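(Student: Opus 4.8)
The plan is to reduce the statement to the single algebraic identity that characterises the golden ratio. Since $\beta = \frac{1+\sqrt{5}}{2}$ satisfies $\beta^2 = \beta + 1$, dividing through by $\beta^{k+2}$ gives
\[
\frac{1}{\beta^k} = \frac{1}{\beta^{k+1}} + \frac{1}{\beta^{k+2}} \qquad \text{for every } k \in \mathbb{N}.
\]
This is precisely the equality between the value contributed by the block $100$ placed in positions $k,k+1,k+2$ and the value contributed by the block $011$ placed in those same positions. So the whole content of the lemma is this one line, and the rest is bookkeeping.

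Concretely, I would argue as follows. Suppose the word $100$ occurs in $\{a_i\}_{i=1}^{\infty}$ starting at position $k$, i.e.\ $a_k = 1$ and $a_{k+1} = a_{k+2} = 0$. Define a new sequence $\{a_i'\}_{i=1}^{\infty}$ by setting $a_k' = 0$, $a_{k+1}' = a_{k+2}' = 1$, and $a_i' = a_i$ for every other index $i$. Then $\{a_i'\}_{i=1}^{\infty}$ is again a sequence with entries in $\{0,1\}$, and
\[
\sum_{i=1}^{\infty}\frac{a_i'}{\beta^i} - \sum_{i=1}^{\infty}\frac{a_i}{\beta^i} = -\frac{1}{\beta^k} + \frac{1}{\beta^{k+1}} + \frac{1}{\beta^{k+2}} = 0
\]
by the identity above. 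There is no convergence subtlety here: only finitely many (three) terms are altered, so both series converge and the difference of their partial sums is eventually the constant displayed above. Hence $\sum_{i=1}^{\infty}a_i'\beta^{-i} = x$, so $\{a_i'\}_{i=1}^{\infty}$ is a coefficient sequence of $x$. The reverse replacement — turning a block $011$ at positions $k,k+1,k+2$ into $100$ — is handled by exactly the same computation read from right to left, which also covers the phrase ``or the other way around'' in the surrounding text.

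The only things that need a word of care, rather than a genuine obstacle, are: (i) checking that the replacement is legitimate as a sequence operation, i.e.\ that after the swap the entries are still in $\{0,1\}$, which is immediate since we only ever write $0$'s and $1$'s; and (ii) recording that the argument applies at any occurrence of $100$ and may therefore be iterated, which is what lets one produce many different expansions of the same $x$ and is the point exploited in the remainder of the section. I do not expect any hard step: the substance is entirely contained in the relation $\beta^2 = \beta+1$, and the proof proper is a three-term telescoping comparison of the two series.
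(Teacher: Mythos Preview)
Your proof is correct and follows essentially the same route as the paper: both reduce the swap to the identity $\frac{1}{\beta^{k}}=\frac{1}{\beta^{k+1}}+\frac{1}{\beta^{k+2}}$, which is equivalent to $\beta^{2}-\beta-1=0$ and hence holds for the golden ratio. Your write-up is a bit more explicit about defining the new sequence and handling convergence, but the substance is identical.
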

\begin{proof}
We write the coefficient sequence of $x$, as infinite series.
Then any $100$ on our expansion, have the following form in our series:
$$\frac{1}{\beta^{k}}+\frac{0}{\beta^{k+1}}+\frac{0}{\beta^{k+2}}$$ for some $k\in \mathbb{N}$. 

Then our claim becomes
$$\frac{1}{\beta^{k}}+\frac{0}{\beta^{k+1}}+\frac{0}{\beta^{k+2}} = \frac{0}{\beta^{k}}+\frac{1}{\beta^{k+1}}+\frac{1}{\beta^{k+2}}$$
$$\Leftrightarrow \frac{1}{\beta}=\frac{1}{\beta^2}+\frac{1}{\beta^3}$$
$$\Leftrightarrow \beta^2-\beta-1=0$$
$$\Leftrightarrow \beta = \pm \frac{1+\sqrt{5}}{2} \quad (Golden \, Ratio)$$

Hence, if $\beta$ is Golden ratio, we can swap $100$ with $011$ on our expansion sequence.
\end{proof}
Later, we will use this lemma and prove that such $x$, whose coefficient sequence is generated by a fair coin toss, has uncountably infinite different $\beta$-expansions when $\beta$ is \textit{Golden Ratio}.
\newline
Now, we will look into more general cases. Let $\{\alpha_i\}_{i=1}^{\infty}$ be an coefficient sequence for given $x$ and $\{\gamma_i\}_{i=1}^{\infty}$ be different sequence obtained by swapping words ${a_1}{a_2}\dots{a_n}$ in $\{\alpha_i\}_{i=1}^{\infty}$ with ${b_1}{b_2}\dots{b_n}$, where $a_i \not= b_i$ for all $i$.
\newline
\newline
If $n=3,4$, we have the following options of swapping words, which might for particular $\beta \in (1,2)$:
$$n=3 \Rightarrow  \left\{
\begin{array}{ll}
      100_{\beta}=011_{\beta} \\
      010_{\beta}=101_{\beta} \\
      001_{\beta}=110_{\beta} \\
\end{array} 
\right. $$

$$n=4 \Rightarrow  \left\{
\begin{array}{ll}
      1000_{\beta}=0111_{\beta} \\
      0100_{\beta}=1011_{\beta} \\
      0010_{\beta}=1101_{\beta} \\
      0001_{\beta}=1110_{\beta} \\
      1100_{\beta}=0011_{\beta} \\
      1010_{\beta}=0101_{\beta} \\
      1001_{\beta}=0110_{\beta} \\
\end{array} 
\right. $$
\newline
\newline
To find for which $\beta$, we can use these swapping words, we associate each of them with polynomial, and solve for $\beta$. If the associated polynomial doesn't have root in $(1,2)$, we say such $\beta$ doesn't exist. We will solve associated polynomial for some of the swapping words above. We have used \cite{klijn_2012} online calculator to find the roots of the following polynomials.
\begin{itemize}

\item \begin{align*}
100_{\beta}=011_{\beta}\quad & {\Rightarrow} \quad \frac{1}{\beta} = \frac{1}{\beta^2}+\frac{1}{\beta^3} \\
                             & {\Rightarrow}\quad \beta^2-\beta-1 = 0\\
                             & {\Rightarrow}\quad \beta = \frac{1+\sqrt{5}}{2} \quad (Golden \, Ratio)\\
\end{align*}
\item \begin{align*}
010_{\beta}=101_{\beta}\quad & {\Rightarrow} \quad \frac{1}{\beta^2} = \frac{1}{\beta}+\frac{1}{\beta^3} \\
                             & {\Rightarrow}\quad \beta^2-\beta+1 = 0\\
                             & {\Rightarrow}\quad \textit{No real $\beta$ exist satisfying this equation.}\\
\end{align*} 
\item \begin{align*}
001_{\beta}=110_{\beta}\quad & {\Rightarrow} \quad \frac{1}{\beta^3} = \frac{1}{\beta^1}+\frac{1}{\beta^2} \\
                             & {\Rightarrow}\quad \beta^2+\beta-1 = 0\\
                             & {\Rightarrow}\quad \textit{No $\beta$ exist on (1,2) satisfying this equation.}\\
\end{align*}
\item \begin{align*}
1000_{\beta}=0111_{\beta}\quad & {\Rightarrow} \quad \frac{1}{\beta} = \frac{1}{\beta^2}+\frac{1}{\beta^3}+\frac{1}{\beta^4} \\
                             & {\Rightarrow}\quad \beta^3-\beta^2-\beta-1 = 0\\
                             & {\Rightarrow}\quad \beta \approx 1.83929\\
                             \end{align*}
\item \begin{align*}
0100_{\beta}=1011_{\beta}\quad & {\Rightarrow} \quad \frac{1}{\beta^2} = \frac{1}{\beta}+\frac{1}{\beta^3}+\frac{1}{\beta^4} \\
                             & {\Rightarrow}\quad \beta^3-\beta^2+\beta+1 = 0\\
                             & {\Rightarrow}\quad \textit{No $\beta$ exist on (1,2) satisfying this equation.}\\
\end{align*}
\item \begin{align*}
0010_{\beta}=1101{\beta}\quad & {\Rightarrow} \quad \frac{1}{\beta^3} = \frac{1}{\beta}+\frac{1}{\beta^2}+\frac{1}{\beta^4} \\
                             & {\Rightarrow}\quad \beta^3+\beta^2-\beta+1 = 0\\
                             & {\Rightarrow}\quad \textit{No $\beta$ exist on (1,2) satisfying this equation.}\\
\end{align*}
\item \begin{align*}
0001{\beta}=1110{\beta}\quad & {\Rightarrow} \quad \frac{1}{\beta^4} = \frac{1}{\beta^1}+\frac{1}{\beta^2}+\frac{1}{\beta^3} \\
                             & {\Rightarrow}\quad \beta^3-\beta^2-\beta-1 = 0\\
                             & {\Rightarrow}\quad \textit{No $\beta$ exist on (1,2) satisfying this equation.}\\
\end{align*}
\item \begin{align*}
1100{\beta}=0011{\beta}\quad & {\Rightarrow} \quad \frac{1}{\beta} + \frac{1}{\beta^2}=\frac{1}{\beta^3}+\frac{1}{\beta^4} \\
                             & {\Rightarrow}\quad \beta^3+\beta^2-\beta-1 = 0\\
                            & {\Rightarrow}\quad \textit{No $\beta$ exist on (1,2) satisfying this equation.}\\
\end{align*}
\item \begin{align*}
1010{\beta}=0101{\beta}\quad & {\Rightarrow} \quad \frac{1}{\beta} + \frac{1}{\beta^3}=\frac{1}{\beta^2}+\frac{1}{\beta^4} \\
                             & {\Rightarrow}\quad \beta^3-\beta^2+\beta-1 = 0\\
                             & {\Rightarrow}\quad \textit{No $\beta$ exist on (1,2) satisfying this equation.}\\
\end{align*}
\item \begin{align*}
1001{\beta}=0110{\beta}\quad & {\Rightarrow} \quad \frac{1}{\beta} + \frac{1}{\beta^4}=\frac{1}{\beta^2}+\frac{1}{\beta^3} \\
                             & {\Rightarrow}\quad \beta^3-\beta^2-\beta+1 = 0\\
                             & {\Rightarrow}\quad \textit{No $\beta$ exist on (1,2) satisfying this equation.}\\
\end{align*}
\end{itemize}
As we see from the calculations above, for the words of length $3,4$, we can only replace $100$ with $011$ when $\beta=\frac{1+\sqrt{5}}{2}$ and $1000$ with $0111$ when $\beta \approx 1.83929$. 
\chapter{Set of expansion of a given $x$ with the base of golden ratio.}
\ \ In this chapter we will talk about expansions in the base \textit{Golden Ratio}, i.e $\beta=\frac{1+\sqrt{5}}{2}$. First we will show that the probability of having infinitely many $100$'s and $011$'s in an expansion generated by infinite coin toss ($1$ for Head, $0$ for Tails) is $1$. Then we will use this fact to prove the main result of this chapter, which is to show that $x$, whose coefficient sequence is generated by a fair coin toss, has uncountably infinite different $\beta$-expansions. We will first start with introducing some notations that we will use along the chapter.
\newline
\ \ Let $\mathbb{S}_{\beta}(x)$ be the set of expansions of $x \in [0,\frac{1}{\beta-1}]$ with the base of $\beta \in (1,2)$. 
$$\mathbb{S}_{\beta}(x) = \{\{e_i\}_{i=1}^{\infty}: e_i \in \{0,1\} \quad and \quad \sum_{i=1}^{\infty}\frac{e_i}{\beta^i}=x\}.$$
\ \ Again,let's choose $\beta = \frac{1+\sqrt(5}{2}$ \textit{(Golden Ratio)}, and let  $\{a_i\}_{i=1}^{\infty}$ be generated by a fair coin toss, where $x=\sum^{\infty}_{i=1}a_i \beta^{-i}$. By the following result, we will prove that we can replce $100$ with $011$ when $\beta = \frac{1+\sqrt{5}}{2}$.
\begin{lemma}
Let $\{a_i\}_{i=1}^{\infty}$ be coefficient sequence that is generated by a fair coin toss. Then the probability of having infinitely many $100s$ and $011s$ on our sequence is $1$.
\end{lemma}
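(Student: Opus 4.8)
The plan is to reduce the statement to the second Borel--Cantelli lemma by isolating non-overlapping copies of the two words. First I would partition the index set $\mathbb{N}$ into the consecutive blocks $B_k=\{3k+1,\,3k+2,\,3k+3\}$ for $k=0,1,2,\dots$, so that the triples $(a_{3k+1},a_{3k+2},a_{3k+3})$ depend on pairwise disjoint collections of coin tosses; hence these triples are mutually independent, each uniformly distributed over the eight words of length $3$.

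Next I would define the events $E_k=\{(a_{3k+1},a_{3k+2},a_{3k+3})=100\}$. Since the coin is fair, $\mathbb{P}(E_k)=\tfrac{1}{8}$ for every $k$, and the $E_k$ are independent by the previous paragraph, so $\sum_{k=0}^{\infty}\mathbb{P}(E_k)=\infty$. The second Borel--Cantelli lemma then gives $\mathbb{P}(\limsup_k E_k)=1$; that is, with probability $1$ infinitely many of the blocks $B_k$ read $100$, and each such block is in particular an occurrence of the word $100$ inside $\{a_i\}_{i=1}^{\infty}$. Applying the identical argument to $F_k=\{(a_{3k+1},a_{3k+2},a_{3k+3})=011\}$ shows that, with probability $1$, infinitely many blocks read $011$.

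Finally I would intersect the two events: each has probability $1$, so their intersection does as well, and on this intersection the sequence $\{a_i\}_{i=1}^{\infty}$ contains infinitely many copies of $100$ and infinitely many copies of $011$ simultaneously, which is exactly the claim. The only point needing care --- and the place a naive attempt goes wrong --- is independence: occurrences of a length-$3$ word at overlapping positions are \emph{not} independent, so one must commit to the aligned, disjoint blocks $B_k$ before invoking Borel--Cantelli. This costs nothing, since passing to aligned blocks merely undercounts occurrences and we only need infinitely many of them. (If one prefers to avoid Borel--Cantelli, the same partition gives the direct estimate $\mathbb{P}(\text{none of } B_0,\dots,B_{N-1}\text{ reads }100)=(7/8)^N\to 0$, and likewise for $011$.)
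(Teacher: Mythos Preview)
Your proof is correct and follows essentially the same route as the paper: partition the sequence into disjoint length-$3$ blocks to secure independence, then invoke the second Borel--Cantelli lemma. The only cosmetic difference is that the paper lumps the two target words into a single event $E_i$ with $\mathbb{P}(E_i)=\tfrac14$, whereas you treat $100$ and $011$ separately and intersect at the end; your version is in fact slightly more careful, since it explicitly yields infinitely many occurrences of \emph{each} word rather than merely of their union.
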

\begin{proof}
If we partition our given sequence by words of length 3, we get the following
\[ 
x = \underbrace{|a_1 a_2 a_3 |}_{\text{1}}\underbrace{|a_4 a_5 a_6 |}_{\text{2}}\underbrace{|a_7 a_8 a_9 |}_{\text{3}}\underbrace{|a_{10} a_{11} a_{12} |}_{\text{4}}\underbrace{\dots}_{\text{$\rightarrow \infty$}}
\]
If we denote the event of getting $100_{\beta}$ or $011_{\beta}$ on the $i^{th}$ word by $E_i$, then we have $P(E_i) = \frac{1}{4}$.
\newline
\newline
Since we have infinite number of $E_i$'s and each of them is independent with the sum of probabilities $\sum^{\infty}_{i=1}P(E_i)=\sum^{\infty}_{i=1}\frac{1}{4}=\infty$, converse $Borel-Cantelli$ lemma says that probability of infinitely many of $E_i$'s occuring is $1$.
\end{proof}
In fact our previous result takes into account $100_{\beta}$'s and $001_{\beta}$'s that may occur on a certain positions (starting from position $3k+1; k \in \mathbb{N}$) of our expansion sequence. Applying similar argument we can fill the gap and show that $100_{\beta}$ or $011_{\beta}$'s appearing at any position of our sequence are infinitely many.
\newline
\begin{lemma}
Let $x = \sum_{i=1}^{\infty} a_i \beta^{-i}$, where $\{a_i\}_{i=1}^{\infty}$ be generated by fair coin toss, and $\beta$ is Golden Ratio. Then $\mathbb{S}_{\beta}(x)$ is uncountably infinite set.
\end{lemma}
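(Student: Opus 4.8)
\section*{Proof proposal}

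The plan is to exhibit an injection from the power set of an infinite set into $\mathbb{S}_{\beta}(x)$, using a single $100\leftrightarrow 011$ substitution as the elementary move. By the previous lemma, with probability $1$ the sequence $\{a_i\}_{i=1}^{\infty}$, read in the consecutive length-$3$ blocks $|a_1a_2a_3|\,|a_4a_5a_6|\,\dots$, contains infinitely many blocks equal to $100$ or to $011$. Let $S$ be the (almost surely infinite) collection of such blocks. The essential point is that the blocks in $S$ are pairwise disjoint, being drawn from the fixed partition of $\mathbb{N}$ into consecutive triples; so we need only the partitioned form of the previous lemma, not the remark about arbitrary positions.

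For every subset $A\subseteq S$ define a new $0$--$1$ sequence $e^{(A)}$ by replacing, inside each block of $A$, the word $100$ by $011$ and $011$ by $100$, and leaving every coordinate outside $\bigcup A$ equal to $a_i$. First I would check that $e^{(A)}\in\mathbb{S}_{\beta}(x)$. Since the terms $e^{(A)}_i\beta^{-i}$ are nonnegative and dominated by the convergent geometric series $\sum_i\beta^{-i}$, the series $\sum_i e^{(A)}_i\beta^{-i}$ converges absolutely and may be regrouped along the partition into length-$3$ blocks. On each block the contribution is unchanged: outside $A$ the coordinates are literally the same, and on a block of $A$ the identity $\beta^{-k}=\beta^{-(k+1)}+\beta^{-(k+2)}$ from the swap lemma shows that $100$ and $011$ contribute equally. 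Hence $\sum_i e^{(A)}_i\beta^{-i}=\sum_i a_i\beta^{-i}=x$.

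Next I would verify injectivity of $A\mapsto e^{(A)}$. If $A\neq A'$, pick a block $B$ in their symmetric difference; on $B$ exactly one of $e^{(A)},e^{(A')}$ carries the original word and the other the swapped word, and since $100$ and $011$ differ in every one of their three entries, the two sequences disagree on $B$. Thus $A\mapsto e^{(A)}$ is injective, so $|\mathbb{S}_{\beta}(x)|\ge|\mathcal{P}(S)|=2^{\aleph_0}$ and $\mathbb{S}_{\beta}(x)$ is uncountably infinite; this holds for almost every coin-toss sequence, hence for $x$ as in the statement.

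The one step that really needs care, and the one I expect to be the main obstacle, is passing from ``each individual swap preserves the value'' (the swap lemma) to ``performing infinitely many swaps simultaneously preserves the value''. This is precisely where absolute convergence together with the disjointness of the blocks is used, so that the infinite regrouping of the series is legitimate; without disjointness a substitution could interfere with a neighbouring block and the blockwise bookkeeping would break down. Everything else is routine: infinitude of $S$ is the content of the previous lemma, and distinctness of the resulting expansions follows from the coordinatewise difference of the two swappable words.
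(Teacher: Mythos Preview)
Your argument is correct and follows essentially the same route as the paper: both proofs fix the length-$3$ block partition, use the previous lemma to get infinitely many blocks equal to $100$ or $011$, and then build an injection from an uncountable set into $\mathbb{S}_{\beta}(x)$ by independently choosing the word in each such block. The paper indexes the choices by $\{0,1\}^{\mathbb N}$ via a map $g(0)=100$, $g(1)=011$, whereas you index by $\mathcal P(S)$ via ``swap or not''; these parametrisations are in obvious bijection and yield the same family of expansions. If anything, your write-up is slightly more careful than the paper's, since you explicitly justify that infinitely many simultaneous swaps still give an expansion of $x$ by invoking absolute convergence and the disjointness of the blocks, a point the paper leaves implicit.
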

\begin{proof}

To show this set is uncountable, we are getting back to our previous setting, and chop a given sequence $x=(a_1a_2a_3a_4a_5a_6a_7a_8a_9a_{10}\dots)$ into words of length 3. Then, without loss of generality, we blank out all capsules with $100_{\beta}$'s or $011_{\beta}$'s as follows:
$$ x =\dots |\hrectangle|a_{k+1}a_{k+2}a_{k+3}|a_{k+4}a_{k+5}a_{k+6}|\dots \dots|\hrectangle|\dots$$
\newline
\newline
Define an isomorphism $g:\{0,1\}\rightarrow\{100_{\beta},011_{\beta}\}$ such that $g(0)=100_{\beta}$ and $g(1) = 011_{\beta}$.
\newline
\newline
Define map $f:\{0,1\}^{\mathbb{N}}\rightarrow\mathbb{S}_{\beta}(x)$ given as 
\[
f(\{a_i\}_{i=1}^{\infty}) =\dots g(a_1)a_{k+1}a_{k+2}a_{k+3}a_{k+4}a_{k+5}a_{k+6} \dots g(a_2). 
\]
That is, since each $a_i$ could only take $0$ or $1$, we can put the value of $g(a_i)$ in the $i^{th}$ word.
\newline
\newline
To prove that $\mathbb{S}_{\beta}(x)$ is uncountable, we just need to show $f$ is injective. Then, since $\{0,1\}^{\mathbb{N}}$ is uncountable, we can conclude that $\mathbb{S}_{\beta}(x)$ is uncountable too.
\newline
To show that $f$ is injective assume $\{p_i\}_{i=1}^{\infty}, \{q_i\}_{i=1}^{\infty} \in \{0,1\}^{\mathbb{N}}$ such that $\{p_i\}_{i=1}^{\infty}$ and  $\{q_i\}_{i=1}^{\infty}$ differs in the $k^{th}$ place. That is $\{p_i\}_{i=1}^{\infty}\neq \{q_i\}_{i=1}^{\infty}$ Then clearly, $g(p_k)\neq g(q_k)$ and hence, $f(\{p_i\}_{i=1}^{\infty})\neq f(\{q_i\}_{i=1}^{\infty}).$ So we conclude that $f$ is injective. Since $\{0,1\}^{\mathbb{N}}$ is uncountable, we say $\mathbb{S}_{\beta}(x)$ is uncountable as well.
\end{proof}
\chapter{Greedy and Lazy Expansions} 
In this and the next chapters, our main focus will be the Pal Erdos's paper and mainly talk about \textit{Theorem 1}. This paper characterises the unique expansions of $1 \in \mathbb{Z} $ But first we need to familiarize ourselves with the definitions and results that are building blocks of the \textit{Theorem 1} that we will prove in the first section of this chapter. Throughout the paper, the letter $q$ is used to denote the number we refer as $\beta$, but we will stick to our notation throughout this paper. First, we need to familirise ourselves with the working terminology.
\begin{definition}[Lexicographic order]
Let $(a_i)$, $(b_i)$ be the sequences of real numbers. We say $(a_i)$ is lexicographically less than $(b_i)$, denoting $(a_i) \stackrel{L}{<}(b_i)$, if there exist positive integer $n \in \mathbb{N}$ such that 
\[\left\{
\begin{array}{ll}
        i<n \Rightarrow  a_i = b_i  \\
        i = n \Rightarrow  a_i < b_i \\
\end{array} 
\right. 
\]
\end{definition}
 
\begin{definition}[Lazy Expansion]
Let $(a_i) \in \{0,1\}^{\mathbb{N}}$ be a beta expansion of given $x\in [0,\frac{1}{\beta-1}]$. We say $(a_i)$ is lazy expansion of $x$, if it's lexicographically smallest expansion of $x$.
\end{definition}
Note that, by definition, in the \textit{lazy expansion} we always try to put $0$ in the $i$'th position, and compensate it by putting $1$'s on the all positions after that. Hence, we can formulate its $i$'th element in the following way:
\[a_i = 
\left\{
\begin{array}{ll}
        0 \qquad \text{    if  } \ \ \sum_{j=1}^{i-1}a_j\beta^{-j}+  \sum_{j=i+1}^{\infty}\beta^{-j} \geq x\\
        1 \qquad  \text{    if  }\ \  \sum_{j=1}^{i-1}a_j\beta^{-j}+  \sum_{j=i+1}^{\infty}\beta^{-j} < x \\
\end{array} 
\right. 
\]

\begin{definition}[Greedy expansion]
Let $(a_i) \in \{0,1\}^{\mathbb{N}}$ be a beta expansion of given $x\in [0,\frac{1}{\beta-1}]$. We say $(a_i)$ is greedy expansion of $x$, if it's lexicographically largest expansion of $x$.
\end{definition}
Again, note that, by definition, in the \textit{greedy expansion} we always try to put $1$'s in the $i$'th position. If the sum of gets larger than $x$ when we put $1$, we put $0$ instead. Hence, we can formulate $i$'th element in the \textit{lazy expansion} as follows:
m'th element, for all $m\geq 1$, is generated as follows.
\[a_i = 
\left\{
\begin{array}{ll}
        0 \qquad \text{    if  } \ \ \sum_{j=1}^{i-1}a_i\beta^{-j}+  \beta^{-i} > x\\
        1 \qquad  \text{    if  }\ \  \sum_{j=1}^{i-1}a_i\beta^{-j}+ \beta^{-i} \leq  x \\
\end{array} 
\right. 
\]

\begin{definition} [Unique expansion]
We say $(a_i) \in \{0,1\}^{\mathbb{N}}$ is a unique expansion of given $x\in [0,\frac{1}{\beta-1}]$, if $x$ has only 1 $\beta$ expansion
\end{definition}
Recall that, if $\beta$ and $x$ are given, we denote the set of $\beta$-expansions of $x$ by $\mathbb{S}_{\beta}(x)$. Then, 
\[
max(\mathbb{S}_{\beta}(x)) = \textit{ greedy expansion of }x
\]
\[
min(\mathbb{S}_{\beta}(x)) = \textit{ lazy expansion of }x
\]
Here $max(\mathbb{S}_{\beta}(x))$ and $min(\mathbb{S}_{\beta}(x))$ denote lexicographically largest and smallest element in $\mathbb{S}_{\beta}(x)$ correspondingly. Hence, if $\mathbb{S}_{\beta}(x)$ has only one element, then $max(\mathbb{S}_{\beta}(x))=min(\mathbb{S}_{\beta}(x))$. That is unique expansion of $x$ is both lazy and greedy.
\newline
If $(a_i)$ is unique expansion of $x$, then it's both greedy and lazy expansion of it. That is because
\chapter{Characterisation of greedy expansion of 1}
In this chapter, we are going to characterise the greedy expansion of $1$. We'll formulate the characterisation statement, at the end of this chapter as a Theorem 1. Before that, we will state 4 different results from Erdos' paper to use later in the proof of Theorem 1.
\begin{lemma}[Lemma 1]Let $x = \sum_{i=1}^{\infty}a_{i}\beta^{-i}$, where $a_i \in \{0,1\}$
\newline
\newline
a) We say that this expansion is greedy if and only if

$$\sum_{i=1}^{\infty}a_{k+i}\beta^{-i} < 1 \ \ \ \forall k \in \mathbb{N} \ such \  that  \ a_k = 0$$
\newline
b)We say that this expansion is lazy if and only if $$\sum_{i=1}^{\infty}(1-a_{k+i})\beta^{-i} < 1 \ \ \ \forall k \in \mathbb{N} \ such \ that\ a_k = 1$$
\end{lemma}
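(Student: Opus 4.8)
The plan is to prove both directions of each equivalence by translating the greedy/lazy definitions (expressed via the running partial sums) into a statement about tails of the sequence, using the geometric series identity $\sum_{i=1}^{\infty}\beta^{-i} = \frac{1}{\beta-1}$ and the fact that $x = \sum_{i=1}^{\infty} a_i\beta^{-i}$. For part (a), I would start from the greedy characterisation of the $k$-th digit: when $a_k = 0$, greediness forced that choice, meaning that putting a $1$ in position $k$ (with the already-fixed prefix $a_1\dots a_{k-1}$) would overshoot, i.e. $\sum_{j=1}^{k-1} a_j\beta^{-j} + \beta^{-k} > x$. Subtracting $\sum_{j=1}^{k-1} a_j\beta^{-j}$ from both sides and using $x = \sum_{j=1}^{k-1} a_j\beta^{-j} + \sum_{j=k}^{\infty} a_j\beta^{-j}$ (with $a_k=0$), this becomes $\beta^{-k} > \sum_{j=k+1}^{\infty} a_j\beta^{-j}$; multiplying through by $\beta^{k}$ gives exactly $\sum_{i=1}^{\infty} a_{k+i}\beta^{-i} < 1$. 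The converse direction runs the same computation backwards: if the tail condition holds at every zero position, then at each step the greedy rule (take $1$ whenever it does not overshoot) is consistent with the given digits, so the sequence is the greedy expansion.

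For part (b) I would exploit the symmetry $a_i \mapsto 1-a_i$, which maps $x \mapsto \frac{1}{\beta-1} - x$ and swaps lazy with greedy expansions (the lexicographically smallest expansion of $x$ corresponds to the lexicographically largest expansion of $\frac{1}{\beta-1}-x$). Thus if $b_i := 1-a_i$, then $(a_i)$ is lazy for $x$ iff $(b_i)$ is greedy for $\frac{1}{\beta-1}-x$, and the condition "$a_k = 1$" becomes "$b_k = 0$"; applying part (a) to $(b_i)$ yields $\sum_{i=1}^{\infty} b_{k+i}\beta^{-i} < 1$, i.e. $\sum_{i=1}^{\infty}(1-a_{k+i})\beta^{-i} < 1$. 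Alternatively one can argue directly from the lazy digit formula given in the Lazy Expansion definition: when $a_k = 1$, laziness forced it, so putting a $0$ there and $1$'s everywhere after would undershoot, $\sum_{j=1}^{k-1} a_j\beta^{-j} + \sum_{j=k+1}^{\infty}\beta^{-j} < x$; rearranging with $x = \sum_{j=1}^{k-1}a_j\beta^{-j} + \beta^{-k} + \sum_{j=k+1}^{\infty} a_j\beta^{-j}$ and using $\sum_{j=k+1}^{\infty}\beta^{-j} = \beta^{-k}/(\beta-1) = \beta^{-k} + \sum_{j=k+1}^{\infty}(\text{something})$ leads, after multiplying by $\beta^k$, to $\sum_{i=1}^{\infty}(1-a_{k+i})\beta^{-i} < 1$.

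The main subtlety — not so much an obstacle as a point requiring care — is handling the strict versus non-strict inequalities correctly. The greedy rule uses "$\le x \Rightarrow$ take $1$" while the overshoot condition that forces a $0$ is strict ("$> x$"), and one must check that this strictness propagates to give a strict inequality $< 1$ in the tail condition (and that equality cases don't secretly allow another expansion). I would also need to be slightly careful that the characterisation is stated for the digit sequence as a whole: it is not enough that the condition holds at one position $k$; the claim is an iff where the right-hand side quantifies over all $k$ with $a_k = 0$ (resp. $a_k=1$), so the converse direction genuinely needs the full family of tail inequalities to reconstruct the greedy/lazy digits inductively. A clean way to organise the converse is induction on $k$: assuming $a_1\dots a_{k-1}$ agree with the greedy expansion, show the tail condition at position $k$ forces $a_k$ to equal the greedy digit.
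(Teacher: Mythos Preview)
Your proposal is correct. The forward direction of (a) is exactly the paper's argument: start from the greedy digit rule at a position with $a_k=0$, subtract the prefix, multiply through by $\beta^{k}$. For the converse of (a) the paper argues by contradiction rather than by your induction: it assumes $(a_i)$ is not greedy, picks a lexicographically larger expansion $(c_i)$ of the same $x$, locates the first index $m$ where they differ (so $a_m=0$, $c_m=1$), and from $\sum_{i>m} a_i\beta^{-i} = \beta^{-m} + \sum_{i>m} c_i\beta^{-i} \ge \beta^{-m}$ reads off $\sum_{i\ge 1} a_{m+i}\beta^{-i} \ge 1$, contradicting the tail hypothesis at $k=m$. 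Your inductive verification that each $a_k$ coincides with the greedy digit is equivalent in content; just be explicit that the case $a_k=1$ is automatic because the remaining tail $\sum_{j>k}a_j\beta^{-j}\ge 0$ gives $\sum_{j\le k}a_j\beta^{-j}\le x$, so the greedy rule also selects $1$ there. For part (b) the paper simply reruns the direct computation in parallel with (a), both directions; your symmetry reduction $a_i \mapsto 1-a_i$, which sends $x \mapsto \frac{1}{\beta-1}-x$ and interchanges lazy with greedy, is a cleaner route the paper does not take --- it yields (b) for free once (a) is established and makes the duality between the two characterisations transparent. (Your sketched direct alternative for (b) is also fine, though the detour via $\beta^{-k}/(\beta-1)$ is unnecessary: subtracting the lazy inequality from $x$ and moving terms gives $\sum_{j>k}(1-a_j)\beta^{-j}<\beta^{-k}$ in one step.)
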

\begin{proof}
a) $ (\Rightarrow) $: Assume $\sum_{i=1}^{\infty}a_{i}\beta^{-i}$ is greedy and let $k \in \mathbb{N}$ such that $a_k = 0$.We will use the definition of greedy algorithm to show the inequality.
\begin{equation*} 
\begin{split}
a_k = 0 & \Rightarrow \sum_{i<k}a_i \beta^{-i} + \beta^{-k} > x \\
& \Rightarrow \sum_{i<k}a_i \beta^{-i} + \beta^{-k} > \sum_{i=1}^{\infty}a_{i}\beta^{-i} \\
& \Rightarrow \sum_{i<k}a_i \beta^{-i} + \beta^{-k} > \sum_{i<k}a_{i}\beta^{-i}+ a_k\beta^{-k}+\sum_{i>k}a_i\beta^{-i} \\
& \Rightarrow  \beta^{-k} > \sum_{i=k+1}^{\infty}a_{i}\beta^{-i} \\
\textit{(mutiplying both sides by } \beta^k) &\Rightarrow 1 > \sum_{i=k+1}^{\infty}a_{i}\beta^{-i+k}\\
\textit{(changing indices)} &\Rightarrow  \sum_{i=1}^{\infty}a_{k+i}\beta^{-i} < 1
\end{split}
\end{equation*}
\newline
\newline
$ (\Leftarrow) $: Assume $\sum_{i=1}^{\infty}(a_{k+i})\beta^{-i} < 1 \ \ \ \forall k \in \mathbb{N} \ such \ that\ a_k = 0$. To get the contradiction, suppose $\sum_{i=1}^{\infty}a_{i}\beta^{-i}$ is not greedy. Then there exist another expansion of $x$ such that $x = \sum_{i=1}^{\infty}c_i\beta^{-i}$, where $c_i \in \{0,1\}$ and $(a_i)\stackrel{L}{<}(c_i)$. By the definition of the lexicographic order, there exist $m \in \mathbb{N}$ such that
\begin{equation*} 
\begin{split}
i<m & : a_i = c_i \\
i = m & : a_i<c_i \\
\end{split}
\end{equation*}
This implies by the choice of $a_i$ and $c_i$ that $a_m = 0$ and $c_m = 1$. 
\newline

Now we will show that, the existence of the expansion $(c_i)$ contradicts our initial assumption that $\sum_{i=1}^{\infty}(a_{k+i})\beta^{-i} < 1 \ \ \ \forall k \in \mathbb{N} \ such \ that\ a_k = 0$.
\begin{equation*} 
\begin{split}
x &= \sum_{i=1}^{\infty}a_{i}\beta^{-i} = \sum_{i=1}^{\infty}c_{i}\beta^{-i}\\
& \Rightarrow \sum_{i=m+1}^{\infty}a_{i}\beta^{-i} = \beta^{-m} + \sum_{i=m+1}^{\infty}c_{i}\beta^{-i} \geq \beta^{-m}\\
& \Rightarrow \sum_{i=m+1}^{\infty}a_{i}\beta^{-i+m} \geq 1\\
& \Rightarrow \sum_{i=1}^{\infty}a_{m+i}\beta^{-i} \geq 1 \ \textit{for some } a_k = 0\\
\end{split}
\end{equation*}
This gives us a contradiction. Hence, there doesn't exist another expansion that is lexicographically greater than $(a_i)$.
\newline
\newline
b) $(\Rightarrow):$Assume $x=\sum_{i=1}^{\infty}a_i\beta^{-i}$ is lazy, and let $k \in \mathbb{N}$ such that $a_k=1$. Again, we will use the definition of the lazy expansion to show the inequality.
\begin{equation*} 
\begin{split}
a_k=1 & \Rightarrow \sum_{i<k}a_{i}\beta^{-i} + \sum_{i>k}\beta^{-i} <x\\ 
& \Rightarrow \sum_{i>k}\beta^{-i} < \beta^{-k}+\sum_{i>k}a_i\beta^{-i}\\
& \Rightarrow \sum_{i>k}(1-a_i)\beta^{-i} < \beta^{-k}\\
& \Rightarrow \sum_{i>k}(1-a_i)\beta^{-i+k} < 1\\
& \Rightarrow \sum_{i=k+1}^{\infty}(1-a_i)\beta^{-i+k}<1\\
& \Rightarrow \sum_{i=1}^{\infty}(1-a_{k+i})\beta^{-i}<1\\
\end{split}
\end{equation*}

$ (\Leftarrow) $: Assume $\sum_{i=1}^{\infty}(1-a_i)\beta^{-i}<1 \ \ \forall k \in \mathbb{N} \textit{ such that } a_k = 1$. To get a contradiction, suppose $x=\sum_{i=1}^{\infty}a_i\beta^{-i}$ is not lazy. Then there exist another expansion of $x$ such that $x = \sum_{i=1}^{\infty}w_i\beta^{-i}$,where $w_i \in \{0,1\}$ which is lexicographically less than $(a_i)$: $(a_i) \stackrel{L}{>}(w_i)$. By the definition of the lexicographic order, there exist $m \in \mathbb{N}$ such that 
\begin{equation*} 
\begin{split}
i<m & : a_i = w_i \\
i = m & : a_i>w_i \\
\end{split}
\end{equation*}
This implies by the choice of $a_i$ and $w_i$ that $a_m = 1$ and $w_m = 0$. 
\newline

Now, since $a_m=1$, we can use our assumption. 
\begin{equation*} 
\begin{split}
x &= \sum_{i=1}^{\infty}a_{i}\beta^{-i} = \sum_{i=1}^{\infty}w_{i}\beta^{-i}\\
& \Rightarrow \sum_{i=m+1}^{\infty}a_{i}\beta^{-i} + \beta^{-m} = \sum_{i=m+1}^{\infty}w_{i}\beta^{-i} \leq \sum_{i>m}\beta^{-i}\\
& \Rightarrow \sum_{i>m}(1-a_i)\beta^{-i} \geq \beta^{-m}\\
& \Rightarrow \sum_{i=m+1}^{\infty}(1-a_{i})\beta^{-i+m} \geq 1\\
& \Rightarrow \sum_{i=1}^{\infty}(1-a_{m+i})\beta^{-i} \geq 1 \ \textit{for some } a_m = 1\\
\end{split}
\end{equation*}
This contradicts our initial assumption, showing there is some $a_k=1$, for which the inequality is not satisfied. Hence, we conclode that there does not exist an expansion, which is lexicographically less than $(a_i)$ So $x=\sum_{i=1}^{\infty}a_i\beta^{-i}$ is lazy.
\newline
\newline
This completes the proof of the lemma.
\end{proof}

\begin{lemma}[Lemma 2]
Let $x \geq 1$:
\newline
a)If $x=\sum_{i=1}^{\infty}a_i\beta^{-i}$ is greedy, then $(a_{k+i})\stackrel{L}{<}(a_i)$ for all $k \in \mathbb{N}$ such that $a_k = 0$.
\newline
\newline
b)If $x=\sum_{i=1}^{\infty}a_i\beta^{-i}$ is unique, then 
\[
(1-a_{k+i})\stackrel{L}{<}(a_i) \text{ for all } k \in \mathbb{N} \text{ such that }a_k = 1
\]
and 
\[
(a_{m+i})\stackrel{L}{<}(a_i) \text{ for all } m \in \mathbb{N} \text{ such that }a_m = 0
\]
\end{lemma}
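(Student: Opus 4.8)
The plan is to prove both parts of Lemma 2 by combining Lemma 1 with an elementary observation: a sum $\sum_{i=1}^\infty c_i\beta^{-i}$ with digits in $\{0,1\}$ is $\geq 1$ when $(c_i)$ is lexicographically $\geq$ the greedy expansion $(a_i)$ of some $x\geq 1$, and conversely a tail that sums to less than $1$ must be lexicographically smaller than $(a_i)$. Throughout I assume $x\geq 1$, so its greedy expansion $(a_i)$ satisfies $\sum_{i=1}^\infty a_i\beta^{-i}=x\geq 1$.

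For part (a): fix $k$ with $a_k=0$. By Lemma 1(a), since $(a_i)$ is greedy, we have $\sum_{i=1}^\infty a_{k+i}\beta^{-i}<1\leq x=\sum_{i=1}^\infty a_i\beta^{-i}$. The key step is to deduce the lexicographic inequality $(a_{k+i})\stackrel{L}{<}(a_i)$ from the inequality between the two real sums. I would argue by contradiction: if $(a_{k+i})\stackrel{L}{\geq}(a_i)$, then either the two sequences agree (forcing the two sums to be equal, contradicting strict inequality), or there is a first index $n$ where $a_{k+n}>a_n$, i.e. $a_{k+n}=1$, $a_n=0$. In the latter case I need that agreeing on a prefix and then having a larger digit forces $\sum a_{k+i}\beta^{-i}\geq\sum a_i\beta^{-i}$; this is the standard fact that lexicographic order on $\{0,1\}^{\mathbb N}$ is compatible with the value map $\sum c_i\beta^{-i}$ in the weak sense ($(c_i)\stackrel{L}{>}(d_i)\Rightarrow \sum c_i\beta^{-i}\geq\sum d_i\beta^{-i}$), which holds because after the splitting index the remaining $\beta^{-n}(1-\cdots)$ contributed by the larger digit dominates any deficit of $\pm\beta^{-i}$ for $i>n$ (bounded by $\sum_{i>n}\beta^{-i}=\beta^{-n}/(\beta-1)$; one must be slightly careful that this geometric tail bound actually works, but since the difference of digits at position $n$ contributes exactly $\beta^{-n}$ and is only matched in the limiting case, weak inequality is fine). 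This contradicts $\sum a_{k+i}\beta^{-i}<\sum a_i\beta^{-i}$, proving $(a_{k+i})\stackrel{L}{<}(a_i)$.

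For part (b): now $(a_i)$ is the unique expansion of $x$, so it is simultaneously greedy and lazy (as noted in the excerpt). The second claim, $(a_{m+i})\stackrel{L}{<}(a_i)$ for $a_m=0$, follows verbatim from part (a) since uniqueness implies greediness. For the first claim, fix $k$ with $a_k=1$. Since $(a_i)$ is lazy, Lemma 1(b) gives $\sum_{i=1}^\infty(1-a_{k+i})\beta^{-i}<1\leq x=\sum_{i=1}^\infty a_i\beta^{-i}$. Then I apply the same lexicographic comparison lemma as above to the sequences $(1-a_{k+i})$ and $(a_i)$: the real-sum inequality $\sum(1-a_{k+i})\beta^{-i}<\sum a_i\beta^{-i}$ forces $(1-a_{k+i})\stackrel{L}{<}(a_i)$, again by contradiction — if not, either equality of sequences gives equality of sums, or a first index $n$ with $1-a_{k+n}>a_n$ forces the weak reverse sum inequality, contradicting strictness.

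The main obstacle is making the bridge between the analytic inequality ($\sum c_i\beta^{-i}<\sum d_i\beta^{-i}$) and the combinatorial one ($(c_i)\stackrel{L}{<}(d_i)$) fully rigorous, because this is exactly where $\beta\in(1,2)$ matters and where the inequalities can be non-strict in edge cases. I would isolate this as a small auxiliary claim: \textbf{for $\beta\in(1,2)$ and $(c_i),(d_i)\in\{0,1\}^{\mathbb N}$, if $(c_i)\stackrel{L}{>}(d_i)$ then $\sum_{i=1}^\infty c_i\beta^{-i}\geq\sum_{i=1}^\infty d_i\beta^{-i}$}, proved by splitting at the first differing index $n$, pulling out the common prefix, and bounding $\sum_{i>n}(c_i-d_i)\beta^{-i}$ below by $-\sum_{i>n}\beta^{-i}=-\beta^{-n}/(\beta-1)$ against the $+\beta^{-n}$ gain at position $n$ — here $\beta<2$ does \emph{not} immediately give $\beta^{-n}\geq\beta^{-n}/(\beta-1)$, so the honest statement is only the weak inequality, obtained by noting $c_n-d_n=1$ and $c_i-d_i\in\{-1,0,1\}$ so the worst case tail is $-\beta^{-n-1}/(1-\beta^{-1})\cdot(\beta-1)=\cdots$; I'd verify the arithmetic carefully there. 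The contrapositive of this auxiliary claim — strict analytic inequality implies strict lexicographic inequality — is precisely what each part needs, and once it is in hand both (a) and (b) are one-line applications of Lemma 1.
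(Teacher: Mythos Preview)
Your auxiliary claim is false, and both parts of your argument rest on it. For $\beta\in(1,2)$ it is \emph{not} true that $(c_i)\stackrel{L}{>}(d_i)$ implies $\sum c_i\beta^{-i}\ge\sum d_i\beta^{-i}$: take $\beta=3/2$, $c=(1,0,0,\dots)$, $d=(0,1,1,\dots)$; then $(c)\stackrel{L}{>}(d)$ but $\sum c_i\beta^{-i}=2/3$ while $\sum d_i\beta^{-i}=4/3$. The arithmetic you left to ``verify carefully'' does not close: at the first differing index $n$ the gain is $\beta^{-n}$, the worst-case deficit from the tail is $\sum_{i>n}\beta^{-i}=\beta^{-n}/(\beta-1)$, and for $\beta<2$ the latter is strictly larger. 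So the contrapositive you want (strict analytic inequality $\Rightarrow$ strict lexicographic inequality) simply fails in general, and neither (a) nor (b) follows from Lemma~1 alone by this route.

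What the paper does, and what your argument is missing, is to exploit the greedy/lazy structure \emph{a second time} at the splitting index. In part (a), if $(a_{k+i})\stackrel{L}{>}(a_i)$ with first difference at $n$, then $a_n=0$; greediness of $(a_i)$ at this position gives the much sharper bound $\sum_{i>n}a_i\beta^{-i}<\beta^{-n}$, and that (not the crude geometric tail) is what forces $\sum_{i>n}(a_i-a_{k+i})\beta^{-i}<\beta^{-n}$ and produces the contradiction. In part (b) the paper does not try to compare sums at all: from $a_n=0$ and $\sum_{i\le n}(1-a_{k+i})\beta^{-i}\le 1\le x$ it builds an explicit second expansion $(w_i)$ of $x$ with $w_n=1$, directly contradicting uniqueness. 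Your proof can be repaired, but only by inserting exactly this re-use of the greedy (resp.\ lazy/unique) hypothesis at the index where the sequences split.
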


\begin{proof}
a) Assume $x=\sum_{i=1}^{\infty}a_i\beta^{-i}$ is greedy and let $k\in \mathbb{N}$ such that $a_k = 0$. Then by Lemma 1, we have
\begin{equation*}
\begin{split}
& \sum_{i=1}^{\infty}a_{k+i} \beta^{-i}  < 1 \leq x=\sum_{i=1}^{\infty}a_i\beta^{-i} \\
& \Rightarrow \sum_{i=1}^{\infty}(a_i-a_{k+1})\beta^{-i} > 0\ \ \ \ \ (1)\\
\end{split}
\end{equation*}
This immidiately implies that $(a_i)\stackrel{L}{\neq}(a_{k+i})$, as the left hand side is strictly greater than $0$. Now, to show $(a_{k+i})\stackrel{L}{<}(a_i)$, we only need to show that $(a_{k+i})\stackrel{L}{>}(a_i)$ gives contradiction. 
\newline
Assume $(a_{k+i})\stackrel{L}{>}(a_i)$. Then there exist $m \in \mathbb{N}$ such that 
\begin{equation*} 
\begin{split}
i<m & : a_i = a_{k+i} \\
i = m & : a_i<a_{k+i} \\
\end{split}
\end{equation*}
Hence we can write $(1)$ as $\sum_{i=m}^{\infty}(a_i-a_{k+i})\beta^{-i}$, since first $m-1$ terms cancel each other out.
Also, by the choice of $a_i$, we conclude that $a_m = 0$ and $a_{k+m} = 1$. Since $x=\sum_{i=1}^{\infty}a_i\beta^{-i}$ is greedy, we now show that
\begin{equation*}
\begin{split}
a_m = 0 &\Rightarrow \sum_{i=1}^{m-1}a_i\beta^{-i} + \beta^{-m} > \sum_{i=1}^{\infty}a_i\beta^{-i}\\
&\Rightarrow \beta^{-m} > \sum_{i=m+1}^{\infty} a_i \beta^{-i} \ \ \ \ \ \ \ \ (2)\\
\end{split}
\end{equation*}
We use this result along with $a_m = 0$ and $a_{k+m} = 1$in $(1)$, 

$$a_m\beta^{-m} + \sum_{i=m+1}^{\infty}a_i\beta^{-i} - (a_{k+m}\beta^{-m} + \sum_{i=m+1}^{\infty}a_{k+i}\beta^{-i}) > 0$$
\newline
$$ \text{(Using }a_m = 0 \text{ and } \text{(2))} \Rightarrow \beta^{-m} - \sum_{i=m+1}^{\infty}a_{k+i}\beta^{-i}-\beta^{-m} > 0$$
$$ \Rightarrow \sum_{i=m+1}^{\infty}a_{k+i}\beta^{-i} < 0$$
This is contradiction, hence such $m \in \mathbb{N}$ cannot exist and 
$(a_{k+i})\stackrel{L}{<}(a_i)$.
\newline
\newline
b) By the definition, if $(a_i)$ is unique, then it's both lazy and greedy. Since it's greedy,$(a_{m+i})\stackrel{L}{<}(a_i)$ is already satisfied whenever $a_m = 0$. We only need to check if $(1-a_{k+i})\stackrel{L}{<}(a_i)$ holds, whenever $a_k=1$. We'll use argument by contradiction to show this.
\newline
\newline
Assume there exist $k \in \mathbb{N}$ such that $a_k=1$ but $ (1-a_{k+i})\stackrel{L}{<}(a_i)$ is not satisfied. Then either $ (1-a_{k+i})\stackrel{L}{=}(a_i)$ or $ (1-a_{k+i})\stackrel{L}{>}(a_i)$.
\newline
\newline
Assume $ (1-a_{k+i})\stackrel{L}{=}(a_i)$. In this case we have
\[
\sum_{i=1}^{\infty}(1-a_{k+i})\beta^{-i} = \sum_{i=1}^{\infty}a_{i}\beta^{-i} = x \geq 1
\]
This contradicts the assumption that $(a_i)$ is lazy, as by Lemma 10.1(b) $\sum_{i=1}^{\infty}(1-a_{k+i})\beta^{-i}<1$ whenever $a_k = 1$. Hence, $ (1-a_{k+i})\stackrel{L}{\neq}(a_i)$.
\newline
\newline
Now, assume $ (1-a_{k+i})\stackrel{L}{>}(a_i)$. Then by the definition of lexicographic order, there exist $n \in \mathbb{N}$ such that 
\[\left\{
\begin{array}{ll}
      For \ i<n &  1-a_{k+i} = a_i \qquad \qquad \qquad \qquad \qquad \qquad \qquad   (1)\\
      For \  i = n &  1-a_{k+n}>a_{i} \Leftrightarrow a_{n} = 0 \ \ and \ \ 1-a_{k+n} = 1 \ \ \ \ \ \  (2)\ \  \\
\end{array} 
\right. 
\]
Since, the expansion is unique, we have 
\[
\sum_{i=1}^{\infty}(1-a_{k+i})\beta^{-i} < 1 \leq x
\]
This implies that there exist an expansion $(1-a_{k+i})$ whose $n'th$ term equals to $1$ $(1-a_{k+n} = 1)$, but still this expansion is less than $x$, while for $(a_i)$ $a_n = 0$. Hence we can find another expansion of $x$, say $(w_i)$,
\[
x = \sum_{i = 1}^{\infty}a_i\beta^{-i} = \sum_{i = 1}^{\infty}w_i\beta^{-i}
\]
such that $w_i = a_i$ for $i<n$, but for $i=n$ $w_n = 1$ and $a_n = 0$. Then $(a_{i})\stackrel{L}{\neq}(w_i)$. To show the existence, of $(w_i)$ we just need to show that $\frac{1}{\beta-1} \geq x - \sum_{i=1}^{n}w_i\beta^{-i} \geq 0$. Then we can generate the rest of the elements of $(w_i)$ using algorithms we have already constructed.
\newline
\newline
Since $ x \leq \frac{1}{\beta-1} $ by assumption, it is guaranteed that 
\[
x - \sum_{i=1}^{n}w_i\beta^{-i} \leq \frac{1}{\beta-1} - \sum_{i=1}^{n}w_i\beta^{-i} < \frac{1}{\beta-1}
\]
To show $x - \sum_{i=1}^{n}w_i\beta^{-i} \geq 0$, we again use 
\[
\sum_{i=1}^{\infty}(1-a_{k+i})\beta^{-i} < 1 \leq x
\]
We can write it as
\[
 0 \leq x - \sum_{i=1}^{\infty}(1-a_{k+i})\beta^{-i} \leq x - \sum_{i=1}^{n}(1-a_{k+i})\beta^{-i}
\]
By construction,
\[\left\{
\begin{array}{ll}
      For \ i<n &  1-a_{k+i} = a_i = w_i \qquad \qquad \qquad \qquad \qquad \qquad \qquad   (1)\\
      For \  i = n &  w_i = 1-a_{k+n}>a_{i} \Leftrightarrow a_{n} = 0 \ \ and \ \ w_n = 1-a_{k+n} = 1 \ \ \ \ \ \  (2)\ \  \\
\end{array} 
\right. 
\]
Hence, 
\[
 \sum_{i=1}^{n}(1-a_{k+i})\beta^{-i} = \sum_{i=1}^{n}(w_i)\beta^{-i}
\]
So we can use it in above inequality to show that 
\[
0 \leq x - \sum_{i=1}^{\infty}(1-a_{k+i})\beta^{-i} \leq x - \sum_{i=1}^{n}(1-a_{k+i})\beta^{-i}
 = x - \sum_{i=1}^{n}(w_i)\beta^{-i}
\]
\[
0 \leq x - \sum_{i=1}^{n}(w_i)\beta^{-i}
\]
Hence, there exist an expansion $(w_i)$ of $x$ such that $(a_{i})\stackrel{L}{\neq}(w_i)$. This contradicts the assumption that $(a_i)$ is unique. So $ (1-a_{k+i})\stackrel{L}{\not >}(a_i)$. Hence $ (1-a_{k+i})\stackrel{L}{<}(a_i)$
\end{proof}
\begin{lemma}
Let $x = \sum_{i=1}^{\infty}a_i\beta^{-i}\leq 1$ and $y = \sum_{i=1}^{\infty}d_i\beta^{-i}$ for $y \in [0,\frac{1}{\beta-1}]$ such that $(d_{k+i})\stackrel{L}{<}(a_i)$ whenever $d_k = 0$. We say that if $\sum_{i=1}^{\infty}a_i\beta^{-i}$ is infinite or $\sum_{i=1}^{\infty}d_i\beta^{-i}$ is finite then $\sum_{i=1}^{\infty}d_i\beta^{-i}$ is greedy expansion of $y$.

\end{lemma}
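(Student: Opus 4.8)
\section*{Proof proposal}

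The plan is to verify, for the expansion $(d_i)$ of $y$, the greedy criterion supplied by part (a) of Lemma~1: that $\sum_{i\ge 1}d_{k+i}\beta^{-i}<1$ for every $k$ with $d_k=0$. Once this holds, Lemma~1(a) applied with $y$ and $(d_i)$ in the roles of $x$ and $(a_i)$ concludes that $(d_i)$ is the greedy expansion of $y$. The genuine difficulty is that the hypothesis is \emph{lexicographic}: for $\beta\in(1,2)$ a lexicographically smaller sequence may have a strictly larger value (e.g.\ $0111\cdots$ versus $1000\cdots$ in the golden base), so one cannot simply pass from $(d_{k+i})\stackrel{L}{<}(a_i)$ and $\sum a_i\beta^{-i}\le 1$ to $\sum d_{k+i}\beta^{-i}\le 1$. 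The resolution is to use the lexicographic inequality at \emph{all} zeros of $(d_i)$ at once, via a recursion that regenerates itself.

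First I would argue by contradiction. Suppose $Z:=\{k:\ d_k=0,\ T_k\ge 1\}$ is non-empty, where $T_k:=\sum_{i\ge 1}d_{k+i}\beta^{-i}\in[0,\tfrac1{\beta-1}]$. For $k\in Z$ the hypothesis gives $(d_{k+i})_{i\ge1}\stackrel{L}{<}(a_i)_{i\ge1}$; let $n(k)\ge 1$ be the first index of disagreement, so $d_{k+i}=a_i$ for $i<n(k)$, while $d_{k+n(k)}=0$ and $a_{n(k)}=1$. Set $\sigma(k):=k+n(k)$, so $d_{\sigma(k)}=0$. Splitting both $T_k$ and $x=\sum_i a_i\beta^{-i}$ at position $n(k)$ and cancelling the common initial block $a_1,\dots,a_{n(k)-1}$ yields the key identity
\[
T_k-x=\beta^{-n(k)}\Bigl(T_{\sigma(k)}-1-A(k)\Bigr),\qquad A(k):=\sum_{j\ge1}a_{n(k)+j}\beta^{-j}\ge 0 .
\]
Since $x\le 1\le T_k$ for $k\in Z$, this gives $T_{\sigma(k)}=1+A(k)+\beta^{n(k)}(T_k-x)\ge 1$, so $\sigma$ maps $Z$ into $Z$.

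Then I would iterate $\sigma$ from some $k_0\in Z$, getting a strictly increasing sequence $k_0<k_1<k_2<\cdots$ in $Z$ (strict since $n(k_t)\ge1$). Writing $W_t:=T_{k_t}-x\ge 0$ and using $\beta^{n(k_t)}\ge\beta$, the identity above becomes $W_{t+1}\ge (1-x)+A(k_t)+\beta W_t$. If $x<1$, dropping $A(k_t)$ forces $W_t\ge (1-x)\tfrac{\beta^t-1}{\beta-1}\to\infty$, contradicting $W_t\le\tfrac1{\beta-1}-x$; here the side hypothesis is not even needed. If $x=1$, we get $W_{t+1}\ge A(k_t)+\beta W_t$, and now the two alternatives of the hypothesis finish the job: if $(a_i)$ is infinite then there is a $1$ in $(a_i)$ beyond position $n(k_0)$, so $A(k_0)>0$, hence $W_1>0$ and $W_t\ge\beta^{t-1}W_1\to\infty$, again impossible; if instead $(d_i)$ is finite, then $k_t\to\infty$ together with $T_{k_t}\ge 1>0$ forces $(d_i)$ to have non-zero digits arbitrarily far out, contradicting finiteness. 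In every case $Z=\emptyset$, which is precisely the greedy criterion, and the lemma follows.

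The crux I expect is recognising that the right device is this self-sustaining recursion \emph{through the zeros of} $(d_i)$ — since ``lexicographic $\Rightarrow$ numerical'' genuinely fails for $\beta<2$ — and identifying exactly why the extra hypothesis (``$(a_i)$ infinite or $(d_i)$ finite'') is only invoked in the boundary case $x=1$: it rules out precisely the degenerate possibility that $(a_i)$ terminates while $(d_i)$ compensates with a tail of $1$'s of maximal value $\tfrac1{\beta-1}$. The splitting identity and the bound $T_k\le\tfrac1{\beta-1}$ are routine once set up.
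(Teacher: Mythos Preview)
Your argument is correct, and at its core it uses exactly the same recursion as the paper: from a zero $k$ of $(d_i)$, the first-disagreement index $n(k)$ produces a new zero $\sigma(k)=k+n(k)$, and this is iterated indefinitely. The difference is purely organisational. The paper runs the recursion \emph{directly}: it sets $k_{j+1}=k_j+m_j$, bounds each block by $\sum_{i=k_j+1}^{k_{j+1}}d_i\beta^{-i}\le\beta^{-k_j}-\beta^{-k_{j+1}}$ using $x\le 1$ and $a_{m_j}=1$, then telescopes to get $\sum_{i>k}d_i\beta^{-i}\le\beta^{-k}$, and finally argues strictness under either side hypothesis. You instead run the same chain \emph{by contradiction}: assuming a bad zero exists, you track the excess $W_t=T_{k_t}-x$ and show it satisfies $W_{t+1}\ge\beta W_t+(1-x)+A(k_t)$, forcing $W_t\to\infty$ against the a priori bound $W_t\le\tfrac1{\beta-1}-x$. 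Your identity $T_k-x=\beta^{-n(k)}(T_{\sigma(k)}-1-A(k))$ is precisely the paper's block estimate rearranged. What your packaging buys is a clean separation of cases showing that the side hypothesis is genuinely needed only at the boundary $x=1$; the paper's telescoping gets strictness for $x<1$ too (since then $\sum_{i<m_j}a_i\beta^{-i}\le x-a_{m_j}\beta^{-m_j}<1-\beta^{-m_j}$), but does not isolate this. Conversely, the paper's direct estimate is slightly shorter and avoids the growth-versus-boundedness bookkeeping.
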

\begin{proof}
We are going to use $(d_{k+i})\stackrel{L}{<}(a_i)$ whenever $d_k = 0$ to show that $\sum_{i=k+1}^{\infty}d_i\beta^{-i} < \beta^{-k}$ which is equivalent to the condition in Lemma 10.1(a) for an expansion to be greedy.
\newline
\newline
We first need to show the existence of a sequence of integers $k_1<k_2<k_3<\dots$ such that 
\[
\sum_{i=k_j+1}^{k_{j+1}}d_i\beta^{-i} \leq \beta^{-k_j}-\beta^{-k_{j+1}} \ \ \  \forall j \in \mathbb{N}
\]
Set $k_1$ such that $d_{k_1} = 0$. Then $(d_{k_1+i})\stackrel{L}{<}(a_i)$ implies that there exist $m_1 \in \mathbb{N}$ such that 
\[\left\{
\begin{array}{ll}
      For \ i<m_1 &  d_{k_1+i} = a_i \qquad \qquad \qquad \qquad \qquad \qquad \qquad  \ (1)\\
      For \  i = m_1 &  d_{k_1+m_1}<a_{m_1} \Leftrightarrow d_{k_1+m_1} = 0 \ \ and \ \ a_{m_1} = 1 \ \ \ \ \  (2)\ \  \\
\end{array} 
\right. 
\]
Using (1) above, we can say that 
\[
\sum_{i=k_1+1}^{k_1+m_1}d_i\beta^{-i} = \sum_{i=1}^{m_1-1}a_i\beta^{-i-k_1} = \beta^{-k_1}\bigg (\sum_{i=1}^{m-1}a_i\beta^{-i}\bigg )
\]
We can write $\sum_{i=1}^{m_1-1}a_i\beta^{-i} \leq 1- \sum_{i=m_1}^{\infty}a_i\beta^{-i}$, as $x = \sum_{i=1}^{\infty}a_i\beta^{-i} \leq 1$
\[
\beta^{-k_1}\bigg (\sum_{i=1}^{m_1-1}a_i\beta^{-i}\bigg ) \leq \beta^{-k_1}\bigg (1-\sum_{i=m_1}^{\infty}a_i\beta^{-i}\bigg )
\]
Since, we already know that $a_{m_1} = 1$, we can write 

\[
\beta^{-k_1}\bigg(1-\sum_{i=m_1}^{\infty}a_i\beta^{-i}\bigg )\leq \beta^{-k_1}\big (1-a_{m_1}\beta^{-m_1}\big ) = \beta^{-k}\big (1-\beta^{-m_1}\big ) = \beta^{-k} - \beta^{-(k+m_1)}
\]
So we can combine these all.
\[
\sum_{i=k_1+1}^{k_1+m_1}d_i\beta^{-i} \leq \beta^{-k_1} - \beta^{-(k_1+m_1)}  
\]
Now, set $k_1+m_1 = k_2$, hence
\[
\sum_{i=k_1+1}^{k_2}d_i\beta^{-i} \leq \beta^{-k_1} - \beta^{-k_2} 
\]
Since $d_{k_1+m_1} = d_{k_2} = 0$, we can repeat the same process using $(d_{k_2+i})\stackrel{L}{<}(a_i)$ for $d_{k_2} = 0$, and find $m_2 \in \mathbb{N}$ such that
\[\left\{
\begin{array}{ll}
      For \ i<m_2 &  d_{k_2+i} = a_i \qquad \qquad \qquad \qquad \qquad \qquad \qquad  \ (1)\\
      For \  i = m_2 &  d_{k_2+m_2}<a_{m_2} \Leftrightarrow d_{k_2+m_2} = 0 \ \ and \ \ a_{m_2} = 1 \ \ \ \ \  (2)\ \  \\
\end{array} 
\right. 
\]
Again, setting $d_3 = d_2+m_2$, we can conclude that
\[
\sum_{i=k_2+1}^{k_3}d_i\beta^{-i} \leq \beta^{-k_2} - \beta^{-k_3} 
\]
Since, we already know that for any $j \in \mathbb{N}$ such that $d_{k_j} = 0$, the $(d_{k_j+i})\stackrel{L}{<}(a_i)$ is satisfied, we can always find $m_j\in \mathbb{N}$, such that $d_{k_j+m_j} = 0$, as
\[\left\{
\begin{array}{ll}
      For \ i<m_j &  d_{k_j+i} = a_i \qquad \qquad \qquad \qquad \qquad \qquad \qquad  \ (1)\\
      For \  i = m_j &  d_{k_j+m_j}<a_{m_j} \Leftrightarrow d_{k_j+m_j} = 0 \ \ and \ \ a_{m_j} = 1 \ \ \ \ \  (2)\ \  \\
\end{array} 
\right. 
\]
Hence, denoting $d_{j+1} = d_j+m_j$ we can generate all the elements of our sequence $k_1<k_2<k_3< \dots <d_j< \dots$
\newline
\newline
Denoting $k=k_1$ we can conclude that, 
\[
\sum_{i>k}d_i\beta^{-i} = \sum_{j=1}^{\infty}\sum_{i = k_j+1}^{k_{j+1}}d_i\beta^{-i} \leq \sum_{j=1}^{\infty}(\beta^{-k_j}-\beta^{-k_{j+1}}) = \beta^{-k}
\]
Hence,
\[
\sum_{i>k}d_i\beta^{-i}  \leq \beta^{-k} 
\]
\[
\Leftrightarrow \sum_{i>k}d_i\beta^{-i+k}  \leq 1
\]
\[
\Leftrightarrow \sum_{i=1}^{\infty}d_{k+i}\beta^{-i}  \leq 1
\]
Now if we can show that, the inequality above becomes strict when either $\sum_{i=1}^{\infty}a_i\beta^{-i}$ is infinite or $\sum_{i=1}^{\infty}d_i\beta^{-i}$ is finite, then our proof is complete.
\newline
\newline

Assume $\sum_{i=1}^{\infty}a_i\beta^{-i}$ is infinite, then the inequality becomes strict, since 
\[
\bigg(1-\sum_{i=m_j}^{\infty}a_i\beta^{-i}\bigg )<\big (1-a_{m_j}\beta^{-m_j}\big )=\big (1-\beta^{-m_j}\big )
\]
for all $j\in \mathbb{N}$ and hence,
\begin{equation*}
\begin{split}
\sum_{i=k_j+1}^{k_{j+1}}d_i\beta^{-i} &= \beta^{-k_j}\bigg ( \sum_{i=1}^{m_j-1}a_i\beta^{-i}\bigg)\\
&\leq \beta^{-k_j}\bigg(1-\sum_{i=m_j}^{\infty}a_i\beta^{-i}\bigg )\\
&<\beta^{-k_j}(1-\beta^{-m_j})\\
&=\beta^{-k_j}-\beta^{-(k_j+m_j)}\\
&=\beta^{-k_j}-\beta^{-k_{j+1}}
\end{split}
\end{equation*}
Hence, 
\[
\sum_{i=1}^{\infty}d_{k+i}\beta^{-i}  < 1
\]
Now assume $\sum_{i=1}^{\infty}d_i\beta^{-i}$ is finite. Then there exist $k_p$ in our sequence such that $d_i = 0$ for $i>k_p$. Then
\begin{equation*}
\begin{split}
\sum_{i=k_1+1}^{k_p}d_i\beta^{-i} &= \sum_{j=1}^{p}\sum_{i = k_j+1}^{k_{j+1}}d_i\beta^{-i} \\
&\leq \sum_{j=1}^{p}(\beta^{-k_j}-\beta^{-k_{j+1}}) \\
&=\beta^{-k_1} - \beta^{-k_{p+1}}\\
&<\beta^{-k_1}=\beta^{-k}
\end{split}
\end{equation*}
Hence, again,
\[
\sum_{i=k+1}^{\infty}d_i\beta^{-i}  < \beta^{-k} 
\]
\[
\Leftrightarrow \sum_{i=k+1}^{\infty}d_i\beta^{-i+k}  < 1
\]
\[
\Leftrightarrow \sum_{i=1}^{\infty}d_{k+i}\beta^{-i}  < 1
\]
This completes the proof of the lemma.
\end{proof}
\begin{lemma}[Lemma 4]
Let $x = \sum_{i=1}^{\infty} a_i\beta^{-i} \leq 1$.
\newline
a) If $(a_{k+i})\stackrel{L}{<}(a_i)$ whenever $a_k = 0$, then $(a_i)$ is greedy expansion.
\newline
b) If $(1-a_{k+i})\stackrel{L}{<}(a_i)$ whenever $a_k = 1$, then $(a_i)$ is lazy expansion.
\end{lemma}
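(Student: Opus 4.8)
The plan is to obtain both parts as consequences of the previous lemma, which already converts the lexicographic condition into greediness of a prescribed expansion.

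For part (a), I would invoke the previous lemma with $y=x$ and $d_i=a_i$, so that the two sequences appearing there coincide: then its hypothesis ``$(d_{k+i})\stackrel{L}{<}(a_i)$ whenever $d_k=0$'' is literally our hypothesis ``$(a_{k+i})\stackrel{L}{<}(a_i)$ whenever $a_k=0$'', and its requirement $y\in[0,\tfrac1{\beta-1}]$ holds since $0\le x\le 1<\tfrac1{\beta-1}$. The side condition ``$\sum a_i\beta^{-i}$ is infinite or $\sum d_i\beta^{-i}$ is finite'' degenerates to ``$\sum a_i\beta^{-i}$ is infinite or $\sum a_i\beta^{-i}$ is finite'', which always holds, so the previous lemma directly yields that $(a_i)$ is the greedy expansion of $x$.

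For part (b), I would pass to the complement. For any $(e_i)\in\{0,1\}^{\mathbb N}$ one has $\sum_{i=1}^{\infty}(1-e_i)\beta^{-i}=\tfrac1{\beta-1}-\sum_{i=1}^{\infty}e_i\beta^{-i}$, hence $(e_i)\in\mathbb S_\beta(x)$ if and only if $(1-e_i)\in\mathbb S_\beta\big(\tfrac1{\beta-1}-x\big)$; and since at the first coordinate where two $0$--$1$ sequences differ complementation flips the strict inequality, the map $(e_i)\mapsto(1-e_i)$ reverses the lexicographic order and therefore interchanges $\max\mathbb S_\beta$ with $\min\mathbb S_\beta$. So I would set $y=\tfrac1{\beta-1}-x\in[0,\tfrac1{\beta-1}]$ (using $0\le x\le 1$) and $d_i=1-a_i$; then $d_k=0\iff a_k=1$ and $d_{k+i}=1-a_{k+i}$, so the hypothesis ``$(1-a_{k+i})\stackrel{L}{<}(a_i)$ whenever $a_k=1$'' is exactly ``$(d_{k+i})\stackrel{L}{<}(a_i)$ whenever $d_k=0$'' with our $x$ in the role of ``$x$''. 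It remains to meet the previous lemma's side condition. If $x=0$ the only expansion of $0$ is $(0,0,\dots)$, which is trivially lazy; if $x\neq0$ some $a_k=1$, and I claim $(a_i)$ cannot be finite --- otherwise, taking $N$ to be the last nonzero index and applying hypothesis (b) at $k=N$ would give $(1,1,1,\dots)\stackrel{L}{<}(a_i)$, which is impossible. Hence $(a_i)$ is infinite, the previous lemma applies, $(1-a_i)$ is the greedy expansion of $y$, and reversing the order-reversing complementation shows $(a_i)=\min\mathbb S_\beta(x)$, i.e. $(a_i)$ is lazy.

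The only point that is not purely mechanical is the observation, needed in (b), that hypothesis (b) already forces $(a_i)$ to be infinite whenever $x\neq0$; this is exactly what one needs in order to fall inside the hypotheses of the previous lemma, and once it is noted, both parts are immediate.
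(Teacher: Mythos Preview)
Your proof is correct and follows essentially the same route as the paper: part (a) is obtained by applying the previous lemma with $d_i=a_i$ and noting the side condition is tautologically met, and part (b) by first ruling out finite $(a_i)$ when $x\neq 0$ via the hypothesis at the last nonzero index, then applying the previous lemma with $d_i=1-a_i$ and passing from greediness of $(1-a_i)$ to laziness of $(a_i)$. You are in fact slightly more careful than the paper, since you spell out explicitly the order-reversing bijection $(e_i)\mapsto(1-e_i)$ between $\mathbb S_\beta(x)$ and $\mathbb S_\beta(\tfrac1{\beta-1}-x)$ that underlies the final step, and your treatment of the edge cases ($x=0$ versus $x\neq 0$) is cleaner than the paper's $0<x<1$ phrasing.
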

\begin{proof}
a) This directly comes from Lemma 10.3. Set $y = x$ and $(d_i) = (a_i)$ in the lemma 3. Then $y$ has expansion $(d_i) = (a_i)$ and assume 
\[
(d_{k+i}) \stackrel{L}{=}(a_{k+i})\stackrel{L}{<}(a_i).
\]
That is equivalent to our assumtion in this lemma such that $(a_{k+i})\stackrel{L}{<}(a_i)$. Now to apply Lemma 10.3, we need extra condition that either $(d_i)$ is finite, or $(a_i)$ is infinite. Since, $(d_i)=(a_i)$ this condition is always satisfied, as $(a_i)$ is either finite or infinite. Hence, we conclude that $(d_i) = (a_i)$ is greedy expansion.
\newline
\newline
b) \ \ First we'll show that if $0<x<1$, then $x$ has infinite expansion. 
\newline
\ \ If $x=0$, then expansion is unique. $a_i = 0$ for all $i \in \mathbb{N}$. Let $0<x<1$, and assume $(a_i)$ is finite. Then there exist $k \in \mathbb{N}$, such that $a_i = 0$ for all $i>k$ and $a_k = 1$. Then $(1-a_{k+i}) = 1$ for all $i \in \mathbb{N}$. Then $(1-a_{k+i})\stackrel{L}{<}(a_i)$ is not satisfied for such $k$, since $(a_i) = 0$ for some $i$, but $(1-a_{k+i})=1$ for all $i$. This contradicts our assumption that $(1-a_{k+i})\stackrel{L}{<}(a_i)$ whenever $a_k = 1$. So expansion of $0<x<1$ is always infinite under this assumtion.
\newline
\ \ Now we can apply Lemma 10.3, similarly to part (a), setting  $d_i = (1-a_i)$. Then whenever $a_k=1$, we have $1-a_k = d_k = 0$. This allows as to apply Lemma 10.3, as
\[
(d_{k+i})\stackrel{L}{<}(a_i) \text{ whenever } d_k=0
\]
is equivalent to
\[
(1-a_{k+i})\stackrel{L}{<}(a_i) \text{ whenever } a_k=1
\]
Hence, we can conclude that $(d_i)$ is greedy expansion. And since $(d_i) = (1-a_i)$, this implies that $(a_i)$ is lazy expansion.
\end{proof}
\begin{theorem}[Theorem 1]Let $1 = \sum_{i=1}^{\infty}a_i\beta^{-i}$.
\newline
a) $(a_i)$ is greedy expansion of $1$ if and only if
\[
(a_{k+i})\stackrel{L}{<}(a_i) \text{ whenever } a_k=0
\]
b) $(a_i)$ is unique expansion of $1$ if and only if
\[
(1-a_{k+i})\stackrel{L}{<}(a_i) \text{ whenever } a_k=1
\]
\end{theorem}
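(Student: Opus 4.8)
The plan is to read Theorem~1 off the preparatory lemmas by specialising to $x=1$, the boundary value for which \emph{both} the ``$x\geq 1$'' results (Lemma~2) and the ``$x\leq 1$'' results (Lemma~3, Lemma~4) apply simultaneously.

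Part (a) is then immediate in both directions. If $(a_i)$ is the greedy expansion of $1$, then since $1\geq 1$ Lemma~2(a) gives $(a_{k+i})\stackrel{L}{<}(a_i)$ whenever $a_k=0$; conversely, if $(a_{k+i})\stackrel{L}{<}(a_i)$ whenever $a_k=0$, then since $1\leq 1$ Lemma~4(a) gives that $(a_i)$ is greedy. So (a) is just the conjunction of those two lemmas at $x=1$. For the forward direction of (b), if $(a_i)$ is the unique expansion of $1$ then it is in particular greedy, and Lemma~2(b) at $x=1$ yields $(1-a_{k+i})\stackrel{L}{<}(a_i)$ whenever $a_k=1$; note that Lemma~2(b) simultaneously produces the companion inequality $(a_{k+i})\stackrel{L}{<}(a_i)$ for $a_k=0$, which is precisely what the converse will want.

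For the converse of (b), assume $(1-a_{k+i})\stackrel{L}{<}(a_i)$ whenever $a_k=1$. Since $1\leq 1$, Lemma~4(b) shows $(a_i)$ is the lazy expansion of $1$. To upgrade this to uniqueness I would aim to show that $(a_i)$ is \emph{also} the greedy expansion of $1$; by Lemma~4(a) (applicable since $1\leq 1$) it suffices to establish the companion inequality $(a_{k+i})\stackrel{L}{<}(a_i)$ whenever $a_k=0$. Once $(a_i)$ is both lazy and greedy we get $\min\mathbb{S}_\beta(1)=(a_i)=\max\mathbb{S}_\beta(1)$, forcing $\mathbb{S}_\beta(1)$ to be a singleton, i.e. $(a_i)$ is unique.

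The main obstacle is exactly that last step: deducing the ``greedy'' lexicographic inequality for the expansion of $1$ from the ``lazy'' one. My intended route is to fix $k$ with $a_k=0$, dispose of the case $a_1=\dots=a_k=0$ separately (using that the greedy expansion of $1$ must start with $a_1=1$ since $\beta^{-1}\leq 1$), and in the remaining case let $j<k$ be the last index with $a_j=1$ and transfer the hypothesis $(1-a_{j+i})\stackrel{L}{<}(a_i)$ into a statement about $(a_{k+i})$ via the complementation $t\mapsto 1-t$ on the intervening block together with the normalisation $\sum_{i\geq 1}a_i\beta^{-i}=1$. This transfer is the genuinely delicate point, and care is needed here: for $\beta$ the golden ratio the lazy expansion $0111\dots$ of $1$ already satisfies the stated hypothesis while failing to be greedy, so the step cannot rest on the hypothesis alone and must exploit either that $(a_i)$ is from the outset the greedy expansion of $1$ --- in which case the converse collapses to a single application of Lemma~4(b) --- or the full force of the equality $\sum_{i\geq 1}a_i\beta^{-i}=1$. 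I would begin by fixing which reading is intended, as that dictates how much of this machinery is actually needed.
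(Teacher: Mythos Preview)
Your plan for part~(a) and for the forward direction of~(b) is exactly the paper's proof: specialise Lemma~2 at $x=1$ for $(\Rightarrow)$ and Lemma~4 at $x=1$ for $(\Leftarrow)$, using that $1$ sits at the common boundary of the hypotheses $x\geq 1$ and $x\leq 1$.

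For the converse of~(b) the paper writes only ``By Lemma~4(b) \ldots\ we can conclude that $(a_i)$ is unique.'' This is precisely the step you flagged as delicate, and you are right to do so: Lemma~4(b) delivers only that $(a_i)$ is \emph{lazy}, not that it is unique. Your golden-ratio example is correct and decisive. With $\beta=(1+\sqrt5)/2$ the sequence $(a_i)=0111\ldots$ is an expansion of $1$; for every $k$ with $a_k=1$ one has $(1-a_{k+i})=000\ldots\stackrel{L}{<}0111\ldots=(a_i)$, so the hypothesis of~(b) holds, yet $1$ also has the greedy expansion $1100\ldots$, so $(a_i)$ is not unique. Thus the converse of~(b), read for an arbitrary expansion $(a_i)$ of $1$, is false as stated, and the paper's one-line invocation of Lemma~4(b) does not close the gap.

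Your proposed resolution---that the intended reading takes $(a_i)$ to be the \emph{greedy} expansion of $1$ from the outset, whence the converse really does reduce to a single application of Lemma~4(b) (greedy $+$ lazy $\Rightarrow$ $\min\mathbb S_\beta(1)=\max\mathbb S_\beta(1)$, hence unique)---is the natural fix and matches the formulation in Erd\H os--Jo\'o--Komornik. So you have not missed an idea that the paper supplies; rather, you have identified an imprecision that the paper's proof passes over.
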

\begin{proof}
a) $(\Rightarrow)$ By Lemma 10.2(a), since $ 1 = x = \sum_{i=1}^{\infty}a_i\beta^{-i}\leq 1$, and $(a_i)$ is greedy, we can conclude that $(a_{k+i})\stackrel{L}{<}(a_i) \text{ whenever } a_k=0$.
\newline
\newline
$(\Leftarrow)$ By Lemma 10.4(a), since $ 1 = x = \sum_{i=1}^{\infty}a_i\beta^{-i}\leq 1$ and $(a_{k+i})\stackrel{L}{<}(a_i) \text{ whenever } a_k=0$, we can conclude that $(a_i)$ is greedy.
\newline
\newline
b) $(\Rightarrow)$By Lemma 10.2(b), since $ 1 = x = \sum_{i=1}^{\infty}a_i\beta^{-i}\leq 1$, and $(a_i)$ is unique, we can conclude that $(1-a_{k+i})\stackrel{L}{<}(a_i) \text{ whenever } a_k=1$.
\newline
\newline
$(\Leftarrow)$ By Lemma 10.4(b), since $ 1 = x = \sum_{i=1}^{\infty}a_i\beta^{-i}\leq 1$ and $(1-a_{k+i})\stackrel{L}{<}(a_i) \text{ whenever } a_k=1$, we can conclude that $(a_i)$ is unique.
\end{proof}

\chapter{Characterisation of lazy expansion of 1}
In this chapter, we present our solution for the first open problem at the end of Erdos paper. 
\begin{lemma}
Let $x = \sum_{i \geq 1} a_i \beta^{-i} \geq 1$ and $\beta \in (\frac{1+\sqrt5}{2},2)$. $x = \sum_{i \geq 1} a_i \beta^{-i}$ is lazy if and only if $(1-a_{k+1})<(a_i)$ whenever $a_k = 1$.
\end{lemma}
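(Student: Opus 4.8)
The statement is the lazy counterpart of Theorem~1(b) and of Lemma~4(b), and I would prove the two implications separately, using the value criterion of Lemma~1(b) as the bridge in both: an expansion $(a_i)$ of $x$ is lazy if and only if $\sum_{i\ge 1}(1-a_{k+i})\beta^{-i}<1$ for every $k$ with $a_k=1$. The hypothesis $\beta\in\left(\frac{1+\sqrt5}{2},2\right)$ I would exploit through its equivalent forms $\beta^{2}-\beta-1>0$, i.e. $\tfrac1{\beta(\beta-1)}<1$ and $\tfrac1{\beta-1}<\beta$. The first of these forces every $\beta$-expansion of a number $\ge 1$ to begin with the digit $1$ (a leading $0$ caps the value at $\sum_{i\ge 2}\beta^{-i}=\tfrac1{\beta(\beta-1)}<1$), and the second says a digit $0$ in position $k$ together with \emph{any} tail is worth strictly less than a single $1$ in position $k-1$; these are the two facts that I expect to carry the argument.

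For ``the lexicographic condition $\Rightarrow$ lazy'': for $x=1$ this is precisely Lemma~4(b), so the work is confined to $x>1$. By Lemma~1(b) it is enough to fix $k$ with $a_k=1$ and prove $\sum_{i\ge 1}(1-a_{k+i})\beta^{-i}<1$. Let $m$ be the first index with $1-a_{k+m}\neq a_m$; the hypothesis gives $1-a_{k+m}<a_m$, hence $a_m=1$, $a_{k+m}=1$, and $a_i=1-a_{k+i}$ for $i<m$, so that $a_{k+m}=1$ is again an index carrying a $1$ in $(a_i)$ and the lexicographic hypothesis may be re-applied there. Splitting the sum as the matching prefix $\sum_{i<m}a_i\beta^{-i}$, the vanishing term $(1-a_{k+m})\beta^{-m}=0$, and the tail $\beta^{-m}\sum_{i\ge 1}(1-a_{(k+m)+i})\beta^{-i}$, I would iterate this decomposition down the sequence of ones; each pass pulls out a fresh factor $\beta^{-\ell}$ with $\ell\ge 1$, and the inequality $\tfrac1{\beta-1}<\beta$ is exactly what makes the resulting geometric series sum to something strictly below $1$. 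One also needs that these passes reach a $1$ within a bounded gap, which follows because an all-ones complement-shift can never be $\stackrel{L}{<}(a_i)$, so runs of zeros after any $1$ have length bounded in terms of $\beta$.

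For ``lazy $\Rightarrow$ the lexicographic condition'': fix $k$ with $a_k=1$. Lemma~1(b) gives $\sum_{i\ge 1}(1-a_{k+i})\beta^{-i}<1\le x=\sum_{i\ge 1}a_i\beta^{-i}$, so the sequences $(1-a_{k+i})$ and $(a_i)$ are distinct, hence $\stackrel{L}{<}$-comparable, and it remains to rule out $(1-a_{k+i})\stackrel{L}{>}(a_i)$. Assuming this, there is a first index $m$ with $a_i=1-a_{k+i}$ for $i<m$, $a_m=0$ and $a_{k+m}=0$, whence, after cancelling the common prefix, $\beta^{-m}\le\sum_{i\ge m}(1-a_{k+i})\beta^{-i}<\sum_{i\ge m}a_i\beta^{-i}$. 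Taking $p\ge m$ to be the first index with $a_p=1$ (which exists, because a lazy expansion of a positive number is infinite), the right-hand side is at most $\beta^{-p}+\sum_{i>p}\beta^{-i}=\beta^{-p}\tfrac{\beta}{\beta-1}$, forcing $\beta^{\,p-m-1}<\tfrac1{\beta-1}$; since $\beta>\frac{1+\sqrt5}{2}$ gives $\beta>\tfrac1{\beta-1}$, this is impossible when $p\ge m+2$, and the remaining case $p=m+1$ (so $a_m=0$, $a_{m+1}=1$) I would close by applying Lemma~1(b) once more at position $m+1$ and recursing on the --- again bounded --- run of zeros.

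The step I expect to be the genuine obstacle is this tail/run bookkeeping common to both directions: both Lemma~1(b) and the lexicographic hypothesis deliver information only at positions carrying the ``right'' digit, while the first lexicographic disagreement may land on a position of the other type, forcing one to walk to the nearest useful position, bound the intervening run of zeros, and only then cash in the single inequality $\tfrac1{\beta-1}<\beta$. Making this iteration terminate with a \emph{strict} inequality --- essentially proving a lazy analogue of the accounting carried out in Lemma~3 --- is where the restriction $\beta>\frac{1+\sqrt5}{2}$ is actually used, and it is the part I would expect to occupy most of the proof.
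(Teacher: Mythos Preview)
Your plan and the paper's argument diverge substantially. For $(\Leftarrow)$ the paper simply invokes Lemma~4(b), which requires $x\le 1$ and therefore really only treats $x=1$ (the case the chapter is about); you do the same for $x=1$ but then attempt an extra iterative argument for $x>1$ that the paper never addresses. For $(\Rightarrow)$ the paper is entirely concrete: from $\beta>\tfrac{1+\sqrt5}{2}$ it gets $a_1=1$, fixes $k=1$, and shows by an explicit induction that the hypothesis $(1-a_{1+i})\stackrel{L}{>}(a_i)$ forces the alternating block $a_{2j-1}=1,\ a_{2j}=0$, because the first break at any even stage $n$ would give the upper bound
\[
x\le \frac1\beta+\frac1{\beta^3}+\cdots+\frac1{\beta^{n+1}}+\sum_{i>n+3}\beta^{-i}
=\frac{1}{\beta-1}-\Bigl(\frac1{\beta^2}+\cdots+\frac1{\beta^{n+2}}\Bigr)-\frac1{\beta^{n+3}}<1.
\]
No recursion through Lemma~1(b) and no tail/run bookkeeping appear; the work is done by the identity $a_{i+1}=1-a_i$ that the matching prefix yields at $k=1$.

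Your abstract route has a genuine gap precisely where you anticipate trouble. In the forward direction, when $p=m+1$ your chain only gives $\beta^{-m}<\sum_{i>m}a_i\beta^{-i}$, while one more application of Lemma~1(b) at position $m+1$ gives $\sum_{i\ge 1}(1-a_{m+1+i})\beta^{-i}<1$; these two inequalities do not combine into a contradiction, and your sketch names no quantity that strictly decreases under the proposed recursion, so the iteration as written does not terminate. The paper avoids this by exploiting the matching prefix $a_i=1-a_{k+i}$ for $i<m$ rather than Lemma~1(b): at $k=1$ that prefix relation is a one-step recurrence forcing the $1010\ldots$ pattern, and the contradiction then comes from an explicit value bound, not from a lexicographic descent. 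Likewise, in your $(\Leftarrow)$ for $x>1$ the prefix $\sum_{i<m}a_i\beta^{-i}$ equals $\sum_{i<m}(1-a_{k+i})\beta^{-i}$, i.e.\ part of the very sum you are trying to show is $<1$, so ``pulling out a fresh factor $\beta^{-\ell}$'' does not by itself drive the total below~$1$; some additional structural input (bounding the initial run of $1$'s in $(a_i)$, say) is needed before the geometric control from $\tfrac{1}{\beta-1}<\beta$ can bite.
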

\begin{proof}
$(\Rightarrow)$
If $\beta \in (\frac{1+\sqrt5}{2},2)$, then $a_1 = 1$ as $$\sum_{i>1}\beta^{-i} \leq 1 \leq x$$.
\newline
\newline
We claim that if $x = \sum_{i \geq 1} a_i \beta^{-i}$ is lazy expansion, then $(1-a_{k+i})<(a_i)$. We will use arguments for contradiction.
\newline
\newline
First assume $(1-a_{k+i})=(a_i)$. By lemma 1, we know that if $x = \sum_{i \geq 1} a_i \beta^{-i}$ is lazy, then $\sum_{i \geq 1} (1-a_{k+i}) \beta^{-i}<1 \leq x =  \sum_{i \geq 1} a_i \beta^{-i} $ whenever $a_k = 1$. However, $\sum_{i \geq 1}(a_i)-\sum_{i \geq 1}(1-a_{k+i})\beta^{-i} > 0$. The strict inequality shows that, these two expansions cannot be equal. Hence, this case is not possible.
\newline
\newline
Now assume $(1-a_{k+i})>(a_i)$, whenever $a_k=1$. Since we know for sure that $a_1 = 1$, let $k=1$.Then $$  \left\{
\begin{array}{ll}
      a_1 = 1 &  (1-a_{1+1}) = 1 \\
      a_2 = 0 &  (1-a_{1+2}) = 0 \text{ or } 1 \\
\end{array} 
\right. $$
If $1-a_{1+2} = 1$, then $a_3 = 0$. However, 
\begin{equation*}
\begin{split}
x = \sum_{i\geq 1}a_i\beta^{-i} &\leq \frac{1}{\beta}+\frac{1}{\beta^4}+\frac{1}{\beta^5}+\frac{1}{\beta^6}+\dots\\
&= \frac{1}{\beta}+ \sum_{i>3}\frac{1}{\beta^i}\\
&= \frac{1}{\beta}+ \frac{1}{\beta^3(\beta-1)} (1)\\
\end{split}
\end{equation*}
This sum (1) is strictly less that $1$ for all $\beta \in (\frac{1+\sqrt{5}}{2},2)$.
This contradicts our initial assumption that $x \geq 1$. Hence $a_3 = 1$ and $(1-a_{1+2}) = 0$. Adding these to our expansion list: 
$$  \left\{
\begin{array}{ll}
      a_1 = 1 &  (1-a_{1+1}) = 1 \\
      a_2 = 0 &  (1-a_{1+2}) = 0 \\
      a_3 = 1 &  (1-a_{1+3}) = 1 \\
      a_4 = 0 &  (1-a_{1+4}) = 0 \text{ or } 1
\end{array} 
\right. $$
Again, if $1-a_{1+4} = 1$, then $a_5 = 0$. However, 
\begin{equation*}
\begin{split}
x = \sum_{i\geq 1}a_i\beta^{-i} &\leq \frac{1}{\beta}+\frac{1}{\beta^3}+\frac{1}{\beta^6}+\frac{1}{\beta^7}+\frac{1}{\beta^8}+\dots\\
&= \frac{1}{\beta}+ \frac{1}{\beta^3}+\sum_{i>5}\frac{1}{\beta^i}\\
&= \frac{1}{\beta}+ \frac{1}{\beta^3}+\frac{1}{\beta^5(\beta-1)} (2)\\
\end{split}
\end{equation*}
This sum (2) is strictly less that $1$ for all $\beta \in (\frac{1+\sqrt{5}}{2},2)$.
This contradicts our initial assumption that $x \geq 1$. Hence $a_5 = 1$ and $(1-a_{1+4}) = 0$. Adding these to our expansion list: 
$$  \left\{
\begin{array}{ll}
      a_1 = 1 &  (1-a_{1+1}) = 1 \\
      a_2 = 0 &  (1-a_{1+2}) = 0 \\
      a_3 = 1 &  (1-a_{1+3}) = 1 \\
      a_4 = 0 &  (1-a_{1+4}) = 0 \\
      a_5 = 1 &  (1-a_{1+5}) = 1 \\
      a_6 = 0 & \dots
\end{array} 
\right. $$
Continuing this process, we already see that there is a pattern emerging while generating the elements of both expansions assuming $(1-a_{k+i})>(a_i)$. That is $a_m = 0$ for all even $m \in \mathbb{N}$, which in turn gives us a choice for choosing $(1-a_{1+m})$ to be $0$ or $1$. If we can show that, we can never have $1-a_{1+m} = 1$ for all even $m \in \mathbb{N}$, then we can conclude that for $k=1$, we have $a_k = 1$, but $(1-a_{k+i})>(a_i)$ not satisfied by the definition of lexicographic order.
\newline
We will show that for $1-a_{1+m} =0$ for all even $m \in \mathbb{N}$ by induction.
\iffalse
We will do it every time by letting $\beta = \frac{1+\sqrt5}{2}$ and showing that $x \leq 1$. That will complete the proof, as $2> \beta > \frac{1+\sqrt{5}}{2}$ implies that the sum on the left hand side, is less than $1$. 
\fi 
We have already shown that this is true for $m = 2$ and $m=4$ as a base case. 
\newline
Now assume the statement is true for the first $n$  natural numbers, where $n$ is even. Then  
$$ (*) \left\{
\begin{array}{ll}
      \dots \\
      a_n = 0 &  (1-a_{1+n}) = 0 \\
      a_{n+1} = 1 &  (1-a_{1+(n+1)}) = 1 \\
      a_{n+2} = 0 &  (1-a_{1+(n+2)}) = 0 \text{ or } 1\\
\end{array} 
\right. $$

Using the above, we will show that $(1-a_{1+(n+2)}) = 0$. 
\newline 
Assume for contradiction $(1-a_{1+(n+2)}) = 1$, then $a_{n+3} = 0$. Hence we can use this for finding an upper bound for $x$.

\begin{equation*}
\begin{split}
x = \sum_{i\leq 1}\frac{a_i}{\beta^i} 
&\leq \frac{1}{\beta}+\frac{1}{\beta^3}+\dots +\frac{1}{\beta^{n+1}} + \frac{1}{\beta^{n+4}}+\frac{1}{\beta^{n+5}}+\dots\\
&= (\frac{1}{\beta}+\frac{1}{\beta^3}+\dots +\frac{1}{\beta^{n+1}}) + \sum_{i>n+3}\frac{1}{\beta^i}\\
& = (B) = \frac{1}{\beta-1} - (\frac{1}{\beta^2}+\frac{1}{\beta^4}+\dots+\frac{1}{\beta^{n+2}})-\frac{1}{\beta^{n+3}} \\
\end{split}
\end{equation*}
If we can show that $1 > $ (B) for $\beta = \frac{1+\sqrt{5}}{2}$, then we are done, as this would imply that $x<1$.
Observe that 
\[
1-(B) = 1-\frac{1}{\beta-1} + (\frac{1}{\beta^2}+\frac{1}{\beta^4}+\dots+\frac{1}{\beta^{n+2}})+\frac{1}{\beta^{n+3}} > 0
\]
This is true, because for $\beta \in (\frac{1+\sqrt{5}}{2}, 2) $ $\frac{1+\sqrt{5}}{2} < 1$, and the rest of the terms are positive. Hence, we conclude $1>$ (B) which is the contradiction we are looking for. Thus, $(1-a_{1+(n+2)}) = 0$ and $(a_{(n+2)}) = 1$. This completes inductive step.
\newline
Hence $(1-a_{k+i})>(a_i)$ is not satisfied for $a_1 = 1$. Hence this case is also not true.
\newline
Then we can conclude that $(1-a_{k+i})<(a_i)$, hence compelete the proof of $(\Rightarrow)$.
\iffalse
Hence $\beta \in (\frac{1+\sqrt{5}}{2}, 2)$ would imply that (B)<1, which is the result we are looking for to prove inductive hypothesis.
$$(A) = (B)$$
$$  \frac{1}{\beta-1} - (\frac{1}{\beta^2}+\frac{1}{\beta^4}+\dots+\frac{1}{\beta^{n}})-\frac{1}{\beta^{n+1}} = \frac{1}{\beta-1} - (\frac{1}{\beta^2}+\frac{1}{\beta^4}+\dots+\frac{1}{\beta^{n}}+\frac{1}{\beta^{n+2}})-\frac{1}{\beta^{n+3}}$$
After lots of cancellation, we get
$$\frac{1}{\beta^{n+1}} =\frac{1}{\beta^{n+2}}+\frac{1}{\beta^{n+3}}$$
$$ \beta^2 = \beta +1$$
$$ \beta^2 - \beta -1 =0$$
This is true for $\beta = \frac{1+\sqrt{5}}{2}$. Hence for this $\beta$, $(B) =1$. 
\fi

$(\Leftarrow)$: Lemma 4 (b) directly applies here, as $x=1 \leq 1$. Hence completes the proof.

\end{proof}
\begin{lemma}
Let $1 = \sum_{i \geq 1} a_i \beta^{-i}$ and $\beta \in (1,\frac{1+\sqrt5}{2}]$. If $\sum_{i \geq 1} a_i \beta^{-i}$ is lazy then $(a_{k+1})>(a_i)$ whenever $a_k = 0$.
\end{lemma}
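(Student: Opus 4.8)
The statement to prove is: for $\beta \in (1, \frac{1+\sqrt5}{2}]$, if $1 = \sum_{i\geq 1} a_i\beta^{-i}$ is the lazy expansion of $1$, then $(a_{k+i}) \stackrel{L}{>} (a_i)$ whenever $a_k = 0$. The plan is to exploit the complementation symmetry between lazy and greedy expansions, then patch it with the facts already established for small bases. Recall from Lemma 10.1(b) that the lazy expansion of $1$ satisfies $\sum_{i\geq1}(1-a_{k+i})\beta^{-i} < 1$ whenever $a_k = 1$. The natural route is: since $(a_i)$ is lazy for $x=1$, ask whether the complementary tail behaves like a greedy expansion. The difficulty is that $1$ is not symmetric under $x \mapsto \frac{1}{\beta-1} - x$ in a way that directly swaps lazy and greedy expansions of the \emph{same} value, so I would instead argue directly with the lexicographic conditions.

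First I would set up the contradiction: suppose there is some $k$ with $a_k = 0$ but $(a_{k+i}) \stackrel{L}{\not>} (a_i)$. As in the proof of Lemma 10.2(b), this splits into the case $(a_{k+i}) \stackrel{L}{=} (a_i)$ and the case $(a_{k+i}) \stackrel{L}{<} (a_i)$. The equality case should be dispatched quickly: $(a_{k+i}) \stackrel{L}{=} (a_i)$ would give $\sum_{i\geq1}a_{k+i}\beta^{-i} = 1$, and combined with $a_k = 0$ and the tail decomposition $1 = \sum_{i<k}a_i\beta^{-i} + \sum_{i\geq k+1}a_i\beta^{-i}$ one gets $\sum_{i<k}a_i\beta^{-i} = 1 - \beta^{-k}\cdot 1 = 1 - \beta^{-k}$, forcing a finite expansion of a partial sum that exceeds what a strict tail allows — here I would need to check this genuinely yields a contradiction with lazyness, perhaps using that a lazy expansion of $0 < y < 1$ is infinite (shown in the proof of Lemma 10.4(b)).

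The main work is the case $(a_{k+i}) \stackrel{L}{<} (a_i)$ with $a_k = 0$. Here I expect the argument to mirror the $(\Rightarrow)$ direction of the previous lemma (Lemma 8.1): using $\beta \leq \frac{1+\sqrt5}{2}$, i.e.\ $\beta^2 \leq \beta+1$, one derives structural constraints on the leading digits of $(a_i)$. Concretely, $\beta \leq \frac{1+\sqrt5}{2}$ forces $\sum_{i\geq1}\beta^{-i} = \frac{1}{\beta-1} \geq \frac{1}{\beta} + \frac{1}{\beta^2} + \cdots$ with enough room that certain blocks of consecutive $0$s are impossible in a representation of $1$; I would unwind the inequality $(a_{k+i}) \stackrel{L}{<} (a_i)$ at its first point of difference $m$ (where $a_m = 1$, $a_{k+m} = 0$) and use the greedy-type estimate $\sum_{i>k+m}a_i\beta^{-i}$ versus $\beta^{-(k+m)}$ together with the global constraint $\sum a_i\beta^{-i} = 1$ to force $x < 1$, contradicting the hypothesis. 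The hard part — and where I would spend the most care — is making the block/induction bookkeeping precise: showing that the assumed lexicographic inequality propagates a periodic pattern of digits (as in the $a_m = 0$ for all even $m$ phenomenon in Lemma 8.1) whose value, evaluated at the extreme base $\beta = \frac{1+\sqrt5}{2}$, is strictly below $1$, and hence below $1$ for all smaller $\beta$ as well. I would close by noting this contradicts $1 = \sum a_i\beta^{-i}$, so no such $k$ exists and $(a_{k+i}) \stackrel{L}{>} (a_i)$ whenever $a_k = 0$, as claimed.
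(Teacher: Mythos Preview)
Your proposal takes a different and considerably heavier route than the paper. The paper's proof rests on one observation you never invoke: for $\beta \in (1,\tfrac{1+\sqrt5}{2}]$ the lazy expansion of $1$ has $a_1 = 0$, since $\sum_{i\ge 2}\beta^{-i} = \tfrac{1}{\beta(\beta-1)} \ge 1$ precisely when $\beta^2-\beta-1 \le 0$. With that in hand the paper simply takes $k=1$ and argues: if $(a_{1+i}) \stackrel{L}{=} (a_i)$ then $a_{i+1}=a_i$ for every $i$, so all $a_i = a_1 = 0$, contradicting $\sum a_i\beta^{-i}=1$; and if $(a_{1+i}) \stackrel{L}{<} (a_i)$ then $a_2=a_1,\ldots,a_m=a_{m-1}$ up to the first discrepancy, so again $a_1=\cdots=a_m=0$, whence $a_{m+1}<a_m=0$ is impossible. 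No tail estimates, no induction mirroring the previous lemma, no complementation symmetry.

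By contrast your plan---tail decomposition for the equality case, then an inductive digit-pattern argument in the style of the previous lemma for the strict case---is both longer and, as written, does not close: in the equality case you correctly reach $\sum_{i<k}a_i\beta^{-i} = 1-\beta^{-k}$ and periodicity of $(a_i)$, but this alone is not incompatible with laziness, and your ``finite expansion'' remark does not produce a contradiction. On the other hand, your quantifier hygiene is better than the paper's: you correctly negate to ``there exists some $k$ with $a_k=0$ and $(a_{k+i}) \stackrel{L}{\not>} (a_i)$'', while the paper only ever treats $k=1$ and never returns to a general $k$. So the paper's short argument, as written, establishes the conclusion only at $k=1$; extending it to every $k$ with $a_k=0$ is a gap left open both there and in your sketch.
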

\begin{proof}
$(\Leftarrow)$: Since, $\beta \in (1,\frac{1+\sqrt5}{2}]$, we know that $a_1$ = 0. Then assume for contradiction $(a_{k+i})=(a_i)$ whenever $a_k = 0$. Since, $a_1$ = 0 we can choose $k=1$. Then this gives the following list.
$$  \left\{
\begin{array}{ll}
      a_1 = 0 &  (a_{1+1}) = 0 \\
      a_2 = 0 &  (a_{1+2}) = 0 \\
      a_3 = 0 &  (a_{1+3}) = 0 \\
      a_4 = 0 &  (a_{1+4}) = 0 \\
      a_5 = 0 &  (a_{1+5}) = 0 \\
      a_6 = 0 & \dots
\end{array} 
\right. $$
As we assumed $(a_{1+i})=(a_i)$, this gives us contradiction since $\sum_{i \geq 1} a_i \beta^{-i} = 0 \neq 1$.
Now, assume that $(a_{k+i})<(a_i)$ whenever $a_k=0$. Then again,
$$  \left\{
\begin{array}{ll}
      a_1 = 0 &  (a_{1+1}) = 0 \\
      a_2 = 0 &  (a_{1+2}) = 0 \\
      a_3 = 0 &  (a_{1+3}) = 0 \\
      a_4 = 0 &  (a_{1+4}) = 0 \\
      a_5 = 0 &  (a_{1+5}) = 0 \\
      a_6 = 0 & \dots
\end{array} 
\right. $$
Again this implies that all $a_i$'s are $0$. Hence, this is also contradiction. So, we conclude that $(a_{k+i})>(a_i)$ whenever $a_k=0$.
\end{proof}
Then we can conclude with the following theorem.
\begin{theorem}[Theorem 2]
Let $1=\sum_{i=1}^{\infty}a_i\beta^{-i}$
\newline
a)If $(1-a_{k+i})<(a_i)$ whenever $a_k=1$, then $(a_i)$ is lazy expansion.
\newline
b) If $\beta \in (1,\frac{1+\sqrt5}{2}]$ and $(a_i)$ is lazy, then $(1-a_{k+i})<(a_i)$ whenever $a_k=1$.
\newline
c) If $\beta \in (\frac{1+\sqrt5}{2},2)$ and $(a_i)$ is lazy, then $(a_{k+i})>(a_i)$ whenever $a_k=0$.
\newline
\end{theorem}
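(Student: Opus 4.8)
The plan is that part (a) falls out of the general machinery for free, whereas parts (b) and (c) are nothing other than the two lemmas just proved in this chapter, read off at $x=1$. For (a): since $1\le 1$, Lemma 10.4(b) applies with $x=1$ word for word, so the hypothesis that $(1-a_{k+i})\stackrel{L}{<}(a_i)$ whenever $a_k=1$ immediately forces $(a_i)$ to be the lazy expansion of $1$; this sufficiency half needs nothing new.

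For (b) and (c) I would first locate $a_1$. In the lazy expansion of $1$ one has $a_1=0$ exactly when a $0$ in the first place still permits reaching $1$, i.e. when $\sum_{i\ge 2}\beta^{-i}=\frac{1}{\beta(\beta-1)}\ge 1$, i.e. when $\beta^{2}-\beta-1\le 0$; thus $a_1=0$ for $\beta\in(1,\frac{1+\sqrt5}{2}]$ and $a_1=1$ for $\beta\in(\frac{1+\sqrt5}{2},2)$, which is exactly why the statement splits at the golden ratio. Having fixed $a_1$, I would pass the laziness of $(a_i)$ through Lemma 10.1(b) — which says precisely that $\sum_{i\ge1}(1-a_{k+i})\beta^{-i}<1$ for every $k$ with $a_k=1$ — and then invoke the lazy-characterisation lemma of this chapter matching the range of $\beta$, specialised to $x=1$ (legitimate since $1\ge1$), to convert that numerical inequality into the asserted lexicographic one.

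The single genuine obstacle is that last numerical-to-lexicographic conversion, which is where the content of the two chapter lemmas really lives. For $\beta\in(1,2)$ the lexicographic and numerical orders on $\{0,1\}^{\mathbb N}$ do not coincide — e.g. $0111\cdots$ is lexicographically below $1000\cdots$ but numerically above it once $\beta<\frac{1+\sqrt5}{2}$ — so ``$\sum(1-a_{k+i})\beta^{-i}<1=\sum a_i\beta^{-i}$'' does not on its own give ``$(1-a_{k+i})\stackrel{L}{<}(a_i)$''. The device for getting past this, exactly as in the first lemma of the chapter, is to assume the reversed lexicographic inequality and read off the digits it forces one position at a time; after finitely many steps one arrives at an explicit upper bound for $\sum a_i\beta^{-i}$ that is strictly below $1$ for every $\beta$ in the relevant range, contradicting $\sum a_i\beta^{-i}=1$. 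The base case is the earlier observation that $100$ and $011$ represent the same value precisely when $\beta=\frac{1+\sqrt5}{2}$, so beyond that value a $0$ can no longer be bought back by ones further out; this is what makes the two $\beta$-ranges behave differently, and is where essentially all the work sits.
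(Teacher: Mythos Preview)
Your plan coincides with the paper's: part (a) is exactly Lemma~4(b) of the previous chapter (your ``Lemma 10.4(b)'') at $x=1$, and parts (b)--(c) are meant to be read straight off the two chapter lemmas; the paper's entire proof is the single sentence ``Lemmas 6.1 and 6.2, combined with Lemma 4 from the previous chapter gives the result.'' The intermediate pass through Lemma~1(b) you propose is harmless but unnecessary: Lemmas 6.1 and 6.2 already take ``$(a_i)$ is lazy'' as their hypothesis and output the lexicographic inequality directly, so there is no separate numerical-to-lexicographic conversion left to perform at the level of this theorem --- that work lives inside the lemmas, as your final paragraph correctly recounts.

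One point worth flagging, which neither you nor the paper addresses: the $\beta$-ranges in (b) and (c) of the theorem are interchanged relative to the two lemmas. Lemma~6.1 treats $\beta\in(\frac{1+\sqrt5}{2},2)$ and yields $(1-a_{k+i})\stackrel{L}{<}(a_i)$ whenever $a_k=1$; Lemma~6.2 treats $\beta\in(1,\frac{1+\sqrt5}{2}]$ and yields $(a_{k+i})\stackrel{L}{>}(a_i)$ whenever $a_k=0$. As literally stated, (b) is in fact false: for $\beta=3/2$ the lazy expansion of $1$ begins $010\ldots$, so $a_2=1$, $a_3=0$, and $(1-a_{2+i})$ starts with $1$ while $(a_i)$ starts with $0$, violating the claimed inequality. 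Your instruction to ``invoke the lemma matching the range of $\beta$'' would expose exactly this mismatch; once the labels on (b) and (c) are swapped to agree with the lemmas, your argument and the paper's are identical.
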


\begin{proof}
    Lemmas 6.1 and 6.2, combined with Lemma 4 from the previous chapter gives the result, hence the characterizes lazy expansion of 1. 
\end{proof}

\end{document}